\newtheorem{theorem}{Theorem}
\newtheorem{lemma}[theorem]{Lemma}
\newcommand{\R}{\mathbb{R}}
\renewcommand{\O}{\mathcal{O}}
\newcommand{\prox}{\mathrm{prox}}
\newcommand{\E}[2][]{\mathbb{E} \ifthenelse{\equal{#1}{}}{}{_{#1}} \left[#2\right]}
\newcommand{\F}{\mathcal{F}}
\newcommand\numberthis{\addtocounter{equation}{1}\tag{\theequation}}
\DeclareMathOperator*{\argmin}{arg\,min}
\title{Stochastic Primal-Dual Method for Empirical Risk Minimization with $\O(1)$ Per-Iteration Complexity}
\author{
  Conghui Tan\thanks{This work was done while Conghui Tan was a research intern at Tencent AI lab.} \\
  The Chinese University of Hong Kong\\
  \texttt{chtan@se.cuhk.edu.hk} \\
  \And
  Tong Zhang \\
  Tencent AI Lab \\
  \texttt{tongzhang@tongzhang-ml.org} \\
  \And
  Shiqian Ma \\
  University of California, Davis \\
  \texttt{sqma@math.ucdavis.edu} \\
  \And
  Ji Liu \\
  Tencent AI Lab, University of Rochester \\
  \texttt{ji.liu.uwisc@gmail.com}
}
\begin{document}

\maketitle

\begin{abstract}
  Regularized empirical risk minimization problem with linear predictor appears frequently in machine learning. In this paper, we propose a new stochastic primal-dual method to solve this class of problems. Different from existing methods, our proposed methods only require $\O(1)$ operations in each iteration. We also develop a variance-reduction variant of the algorithm that converges linearly. Numerical experiments suggest that our methods are faster than existing ones such as proximal SGD, SVRG and SAGA on high-dimensional problems.
\end{abstract}

\section{Introduction}
In this paper, we consider the convex regularized empirical risk minimization with linear predictors:
\begin{equation}
\label{eq:primal}
\min_{x\in X}\left\{P(x)\triangleq \frac{1}{n}\sum_{i=1}^n \phi_i(a_i^\top x) + g(x)\right\},
\end{equation}
where $X\subset \R^d$ is a convex closed feasible set, $a_i\in\R^d$ is the $i$-th data sample, $\phi_i$ is its corresponding convex closed loss function, and $g(x):X\rightarrow \R$ is a convex closed regularizer for model parameter $x$. Here we assume the feasible set $X$ and the regularizer function $g(x)$ are both separable, i.e.,
\begin{equation}
\label{eq:separable}
X=X_1\times\dots\times X_d\quad\text{and}\quad g(x)=\sum_{j=1}^d g_j(x_j).
\end{equation}

Problem \eqref{eq:primal} with structure \eqref{eq:separable} generalizes many well-known classification and regression problems. For example, support vector machine is with this form by choosing $\phi_i(u)=\max\{0,1-b_i u\}$ and $g(x)=\frac{\lambda}{2}\|x\|^2_2$. Other examples include $\ell_1$ logistic regression, $\ell_2$ logistic regression, and LASSO.
One popular choice for solving \eqref{eq:primal} is the proximal stochastic gradient descent method (PSGD). In each iteration of PSGD, an index $i$ is randomly sampled from $\{1,2,\dots,n\}$, and then the iterates are updated using only the information of $a_{i}$ and $\phi_{i}$. As a result, the per-iteration cost of PSGD is $\O(d)$ and independent of $n$.

It is well known that PSGD converges at a sub-linear rate \cite{nemirovski2009robust} even for strongly convex problems, due to  non-diminishing variance of the stochastic gradients. One line of research tried is dedicated to improve the convergence rate of PSGD  by utilizing the finite sum structure in \eqref{eq:primal}. Some representative works include SVRG \cite{johnson2013accelerating, xiao2014proximal}, SDCA \cite{shalev2013stochastic, shalev2014accelerated}, SAGA \cite{defazio2014saga} and  SPDC \cite{zhang2017stochastic}. All these accelerated variants enjoy linear convergence when $g(x)$ is strongly convex and all $\phi_i$'s are smooth.

Since all of these algorithms need to sample at least one data $a_i$ in each iteration (so their per-iteration cost is at least $O(d)$), their potential drawbacks include: 1) they are not suitable for the distributed learning with features distributed; 2) it may incur heavy computation per iteration in the high-dimensional case, i.e., when $d$ is very large.

{\bf Our contributions.} In this paper,
we explore the possibility of accelerating PSGD by making each iteration more light-weighted: only one coordinate of one data is sampled, i.e., one entry $a_{ij}$, in each iteration of the algorithm. This leads to a new algorithm, named SPD1 (stochastic primal-dual method with $\O(1)$ per-iteration complexity), whose per-iteration cost is only $\O(1)$. We prove that the convergence rate of the new method is $\O(1/\sqrt{t})$ for convex problems and $\O(\ln t/t)$ for strongly convex and smooth problems, where $t$ is the iteration counter.
Moreover, the overall computational cost is the same as PSGD in high-dimensional settings. Therefore, we managed to reduce the per-iteration complexity from $\O(d)$ to $\O(1)$ while keep the total computational cost at the same order. Furthermore, by incorporating the variance reduction technique, we develop a variant of SPD1, named SPD1-VR, that converges linearly for strongly convex problems. 
Comparing with existing methods, our SPD1 and SPD1-VR algorithms are more suitable for distributed systems by allowing the flexibility of either feature distributed or data distributed. An additional advantage of our $\O(1)$ per-iteration complexity algorithms is that they are more favorable by asynchronous parallelization and bring more speedup since they admit much better tolerance to the staleness caused by asynchronity. 
Our numerical tests indicate that our methods are faster than both PSGD and SVRG on high-dimensional problems, even in single-machine setting.

We notice that \cite{konevcny2017semi} and \cite{yu2015doubly} used similar ideas in the sense that in each iteration only one coordinate of the iterate is updated using one sampled data. However, we need to point out that their algorithms still need to sample the full vector $a_i$ to compute the directional gradient, and thus the per-iteration cost is still $\O(d)$. 

\subsection{Notation}
We use $A\in\R^{n\times d}$ to denote the data matrix, whose rows are denoted by $a_i$, $i=1,2,\dots,n$. We use $a_{ij}$ to denote the $j$-th entry of $a_i$. $[n]$ denotes the set $\{1,2,\dots,n\}$. $x^t$ denotes the iterate in the $t$-th iteration and $x^t_j$ is its $j$-th entry. We always use $\|w\|$ to denote the $\ell_2$ norm of $w$ unless otherwise specified.

For function $f(z)$ whose domain is $Z$, its proximal mapping is defined as
\begin{equation}
\label{eq:def_prox}
\prox_f(z)=\argmin_{z'\in Z}\left\{f(z') + \frac{1}{2}\|z' - z\|^2\right\}.
\end{equation}
We use $\partial f(z)$ to denote the subdifferential of $f$ at point $z$. $f$ is said to be $\mu$-strongly convex ($\mu> 0$) if
\[
f(z')\geq f(z) + s^\top (z'-z) + \frac{\mu}{2}\|z'-z\|^2\quad\forall s\in\partial f(z), \forall z, z'\in Z.
\]

The conjugate function of $\phi_i:\R\rightarrow\R$ is defined as
\begin{equation}
\label{eq:def_conjugate}
\phi_i^*(y)=\sup_{x\in\R} \left\{y\cdot x - \phi_i(x)\right\}.
\end{equation}
Function $\phi_i$ is $L$-Lipschitz continuous if
\[
\left|\phi_i(x) - \phi_i(y) \right|\leq L|x - y|, \quad\forall x,y\in\R,
\]
which is equivalent to
\[
|s|\leq L, \quad\forall s\in\partial \phi_i(x),\forall x.
\]
$\phi_i$ is said to be $(1/\gamma)$-smooth if it is differentiable and its derivative is $(1/\gamma)$-Lipschitz continuous.

We use $R\triangleq \max_{i\in[n]} \|a_i\|$ to denote the maximum row norm of $A$, and $R'\triangleq \max_{j\in[d]} \|a'_j\|$ to denote the maximum column norm of $A$, where $a'_1,\dots,a'_d$ are the columns of matrix $A$.


\section{Stochastic Primal-Dual Method with $\O(1)$ Per-Iteration Cost} \label{sec:alg}

Our algorithm solves the following equivalent primal-dual reformulation of \eqref{eq:primal}:
\begin{equation}
\label{eq:primal_dual}
\min_{x\in X} \max_{y\in Y}\left\{F(x,y)\triangleq \frac{1}{n}y^\top Ax - \frac{1}{n}\sum_{i=1}^n\phi_i^*(y_i) + g(x) \right\},
\end{equation}
where $Y=Y_1\times \dots\times Y_n$ and $Y_i=\{y_i\in\R^n|\phi_i^*(y_i)<\infty\}$ is the dual feasible set resulted by the conjugate function $\phi_i^*$.
For example, when $\phi_i$ is $L$-Lipschitz continuous, we have $Y_i\subset [-L,L]$ (see Lemma \ref{lemma:dual_bound} in the appendix.)

Our SPD1 algorithm for solving \eqref{eq:primal_dual} is presented in Algorithm \ref{alg:1}. In each iteration of the algorithm, only one coordinate of $x$ and one coordinate of $y$ are updated by using a randomly sampled data entry $a_{i_tj_t}$. Therefore, SPD1 only requires $\O(1)$ time per iteration. Because of this, SPD1 is
a kind of randomized coordinate descent method.

\begin{algorithm}[ht]
    \caption{Stochastic Primal-Dual Method with $\O(1)$ Per-Iteration Cost (SPD1)}\label{alg:1}
    \begin{algorithmic}
    \STATE \textbf{Parameters:} primal step sizes $\{\eta_t\}$, dual step sizes $\{\tau_t\}$
    \vspace{0.1cm}
    \STATE Initialize $x^0=\argmin_{x\in X}g(x)$ and $y_i^0=\argmin_{y_i\in Y_i}\phi_i^*(y_i)$ for all $i\in[n]$
    \FOR{$t=0, 1,\cdots,T-1$}
        \STATE Randomly sample $i_t\in[n]$ and $j_t\in[d]$ independently and uniformly
        \STATE \begin{math}
            x^{t+1}_j=\left\{\begin{array}{ll}
            \prox_{\eta_t g_j}\left(x_j^t - \eta_t\cdot a_{i_t j}y_{i_t}^t \right) \quad&\text{if $j=j_t$} \\
            x^t_j \quad&\text{if $j\neq j_t$}
            \end{array}\right.
            \end{math}
        \STATE \begin{math}
            y^{t+1}_i=\left\{\begin{array}{ll}
            \prox_{(\tau_t/d) \phi_i^*}\left(y_i^t + \tau_t\cdot a_{i j_t}x_{j_t}^t \right) \quad&\text{if $i=i_t$} \\
            y^t_i \quad&\text{if $i\neq i_t$}
            \end{array}\right.
            \end{math}
    \ENDFOR
    \STATE \textbf{Output:} $\hat{x}^T=\frac{1}{T}\sum_{t=0}^{t-1} x^t$ and $\hat{y}^T=\frac{1}{T}\sum_{t=0}^{t-1} y^t$
    \end{algorithmic}
\end{algorithm}

The intuition of this algorithm is as follows. One can rewrite \eqref{eq:primal_dual} into:
\[
\min_{x\in X} \max_{y\in Y}\left\{F(x,y) = \frac{1}{n}\sum_{i=1}^n\sum_{j=1}^d\left[
a_{ij}y_ix_j - \frac{1}{d}\phi_i^*(y_i) + g_j(x_j)
\right] \right\},
\]
which has a two-layer finite-sum structure.
Then Algorithm \ref{alg:1} can be viewed as a primal-dual version of
stochastic gradient descent (SGD) on this finite-sum problem,
which samples a pair of induces $(i_t,j_t)$ and then only utilizes the corresponding summand to do updates
in each iteration. Hence, one can view SPD1 as a combination of randomized coordinate descent method and stochastic gradient descent method applied to the primal-dual reformulation \eqref{eq:primal_dual}.

Note that in the initialization stage, we need to minimize $g(x)$ and $\phi_i^*(y_i)$. Since we assume the proximal mappings of $g(x)$ and $\phi_i^*$ are easy to solve, these two direct minimization problems should be even easier, and thus would not bring any trouble in implementation of this algorithm. {For example, when $g(x)=\lambda\|x\|_1$, it is well known its proximal mapping is the soft thresholding operator. While its direct minimizer, namely $x^0=\min_x g(x)$, is simply $x^0=0$.}

As a final remark, we point out that the primal-dual reformulation \eqref{eq:primal_dual} is a convex-concave bilinear saddle point problem (SSP). This problem has drawn a lot of research attentions recently.
For example, Chambolle and Pock developed an efficient primal-dual gradient method for solving bilinear SSP in \cite{chambolle2011first}, which has an accelerated rate in certain circumstances.
Besides, in \cite{DangLan2014}, Dang and Lan proposed a randomized algorithm for solving \eqref{eq:primal_dual}. However, their algorithm needs to use the full-dimensional gradient in each iteration, so the per-iteration cost is much higher than SPD1.

\section{SPD1 with Variance Reduction}

As we have discussed above, SPD1 has some close connection to SGD. Hence, we can incorporate the variance reduction technique \cite{johnson2013accelerating} to reduce the variance of the stochastic gradients so that to improve the convergence rate of SPD1. This new algorithm, named SPD1-VR, is presented in Algorithm \ref{alg:2}. Similar to SVRG \cite{johnson2013accelerating}, SPD1-VR has a two-loop structure. In the outer loop, the snapshots of the full gradients are computed for both $\tilde{x}^k$ and $\tilde{y}^k$. In the inner loop, the updates are similar to Algorithm \ref{alg:1}, but the stochastic gradient is replaced by its variance reduced version. That is, $a_{i_tj_t}y_{i_t}^t$ is replaced by $a_{i_tj_t}(y_{i_t}^t -\tilde{y}_{i_t}^k)+G_{x,j_t}^k$, where $G_{x,j_t}^k$ is the $j_t$-th coordinate of the latest snapshot of full gradient. This variance reduced stochastic gradient is still an unbiased estimator of the full gradient along direction $x_{j_t}$, i.e.,
\[
\E[i_t]{a_{i_tj_t}(y_{i_t}^t -\tilde{y}_{i_t}^k)+G_{x,j_t}^k} = \E[i_t]{a_{i_tj_t}y_{i_t}^t}=\frac{1}{n}\sum_{i=1}^n a_{i j_t}y^t_i.
\]
Because of the variance reduction technique, fixed step sizes $\eta$ and $\tau$ can be used instead of diminishing ones.

\begin{algorithm}[ht]
    \caption{SPD1 with Variance Reduction (SPD1-VR)}\label{alg:2}
    \begin{algorithmic}
    \STATE \textbf{Parameters}: primal step size $\eta$, dual step size $\tau$
    \vspace{0.1cm}
    \STATE Initialize $\tilde{x}^0\in X$ and $\tilde{y}^0\in Y$
    \FOR{$k=0, 1,\dots,K-1$}
        \STATE Compute full gradients $G_x^k=(1/n)A^\top \tilde{y}^k$ and $G_y^k=(1/d)A \tilde{x}^k$
        \STATE Let $(x^0,y^0)=(\tilde{x}^k, \tilde{y}^k)$
        \FOR{$t=0, 1,\dots,T-1$}
            \STATE Randomly sample $i_t,i'_t\in[n]$ and $j_t,j'_t\in[d]$ independently
            \STATE \begin{math}
                \bar{x}^t_j=\left\{\begin{array}{ll}
                \prox_{\eta_t g_j}\left[x_j^t - \eta\cdot \left(a_{i_t' j}(y_{i'_t}^t - \tilde{y}^k_{i'_t}) + G_{x,j}^k \right)\right] \quad&\text{if $j=j_t$} \\
                x^t_j \quad&\text{if $j\neq j_t$}
                \end{array}\right.
                \end{math}
            \STATE \begin{math}
                \bar{y}^t_i=\left\{\begin{array}{ll}
                \prox_{(\tau_t/d) \phi_i^*}\left[y_i^t + \tau\cdot \left(a_{i j'_t}(x_{j'_t}^t - \tilde{x}^k_{j'_t}) \right) + G_{y,i}^k \right] \quad&\text{if $i=i_t$} \\
                y^t_i \quad&\text{if $i\neq i_t$}
                \end{array}\right.
                \end{math}
            \STATE \begin{math}
                x^{t+1}_j=\left\{\begin{array}{ll}
                \prox_{\eta_t g_j}\left[x_j^t - \eta\cdot \left(a_{i_t j}(\bar{y}_{i_t}^t - \tilde{y}^k_{i_t}) + G_{x,j}^k \right)\right] \quad&\text{if $j=j_t$} \\
                x^t_j \quad&\text{if $j\neq j_t$}
                \end{array}\right.
                \end{math}
            \STATE \begin{math}
                y^{t+1}_i=\left\{\begin{array}{ll}
                \prox_{(\tau_t/d) \phi_i^*}\left[y_i^t + \tau\cdot \left(a_{i j_t}(\bar{x}_{j_t}^t - \tilde{x}^k_{j_t}) + G_{y,i}^k\right) \right] \quad&\text{if $i=i_t$} \\
                y^t_i \quad&\text{if $i\neq i_t$}
                \end{array}\right.
                \end{math}
        \ENDFOR
        \STATE Set $(\tilde{x}^{k+1},\tilde{y}^{k+1})=(x^T,y^T)$
    \ENDFOR
    \STATE \textbf{Output:} $\tilde{x}^K$ and $\tilde{y}^K$
    \end{algorithmic}
\end{algorithm}


Besides the variance reduction technique, another crucial difference between SPD1 and SPD1-VR is that the latter is in fact an extragradient method \cite{korpelevich1976extragradient}. Note that each iteration of the inner loop of SPD1-VR consists two gradient steps: the first step is a normal gradient descent/ascent, while in the second step, it starts from $x^t$ and $y^t$ but uses the gradient estimations at $(\bar{x}^t,\bar{y}^t)$. For  saddle point problems, extragradient method has stronger convergence guarantees than simple gradient methods \cite{nemirovski2004prox}.
Moreover, in each iteration of SPD1-VR, two independent pairs of random indices $(i_t,j_t)$ and $(i_t',j_t')$ are drawn. This is because two stochastic gradients are needed for the extragradient framework. Similar to the classical analysis of stochastic algorithms, we need the stochastic gradients to be independent. However, when updating $\bar{x}^t$ and $x^{t+1}$, we choose the same coordinate $j_t$, so the independence property is only required for two directional stochastic gradients along coordinate $j_t$. 

We note that every iteration of the inner loop only involves $\O(1)$ operations in SPD1-VR. Full gradients are computed in each outer loop, whose computational cost is $\O(nd)$.

{Finally, we have to mention that \cite{palaniappan2016stochastic} also developed a variance-reduction method for solving convex-concave saddle point problems, which is related to Algorithm \ref{alg:2}. However, except for the common variance reduction ideas used in both methods, the method in \cite{palaniappan2016stochastic} and SPD1-VR are quite different. First, there is no coordinate descent counterpart in their method, so the per-iteration cost is much higher than our method. Second, their method is a gradient method instead of extragradient method like SPD1-VR. Third, their method has quadratic dependence on the problem condition number unless extra acceleration technique is combined, while our method depends only linearly on condition number as shown in Section \ref{sec:analysis},.}

\section{Iteration Complexity Analysis}
\label{sec:analysis}

\subsection{Iteration Complexity of SPD1}
\label{sec:analysis_1}


In this subsection, we analyze the convergence rate of SPD1 (Algorithm \ref{alg:1}). We measure the optimality of the solution by primal-dual gap, which is defined as
\[
\mathcal{G}(\hat{x}^T,\hat{y}^T)\triangleq \sup_{y\in Y} F(\hat{x}^T,y) - \inf_{x\in X} F(x,\hat{y}^T).
\]
Note that primal-dual gap equals $0$ if and only if $(\hat{x}^T,\hat{y}^T)$ is a pair of primal-dual optimal solutions to problem \eqref{eq:primal_dual}. Besides, primal-dual gap is always an upper bound of primal sub-optimality:
\begin{align*}
\mathcal{G}(\hat{x}^T,\hat{y}^T)
\geq& \sup_{y\in Y} F(\hat{x}^T,y) - \sup_{y\in Y} \inf_{x\in X} F(x,y) \\
\geq& \sup_{y\in Y} F(\hat{x}^T,y) - \inf_{x\in X} \sup_{y\in Y} F(x,y) \\
=& P(\hat{x}^T) - \inf_{x\in X} P(x).
\end{align*}

Our main result for the iteration complexity of SPD1 is summarized in Theorem \ref{thm:alg1_non_strongly}.

\begin{theorem}\label{thm:alg1_non_strongly}
Assume each $\phi_i$ is $L$-Lipschitz continuous, and the primal feasible set $X$ is bounded, i.e.,
\[
D\triangleq \sup_{x\in X}\|x\| <\infty.
\]
If we choose the step sizes in SPD1 as
\[
\eta_t=\frac{\sqrt{2d}D}{LR\sqrt{t+1}}\quad\text{and}\quad \tau_t=\frac{\sqrt{2d}Ln}{DR'\sqrt{t+1}},
\]
then we have the following convergence rate for SPD1:
\begin{equation}
\label{eq:converge_1}
\E{\mathcal{G}(\hat{x}^T,\hat{y}^T)} \leq \frac{\sqrt{2d}LD\cdot(R+R')}{\sqrt{T}}.
\end{equation}
\end{theorem}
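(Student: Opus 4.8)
The plan is to run the standard ``gap telescoping'' argument for stochastic primal--dual schemes, the only genuinely new bookkeeping being that the iteration carries \emph{two} independent random choices (the sample $i_t$ and the coordinate $j_t$), and it is the coordinate choice that dictates the $\sqrt d$-scaled step sizes. Because $F(\cdot,y)$ is convex, $F(x,\cdot)$ is concave, and $g,\phi_i^*$ are convex, Jensen's inequality applied to the averages $\hat x^T,\hat y^T$ gives, for every fixed $x\in X,\ y\in Y$,
\[
F(\hat x^T,y)-F(x,\hat y^T)\ \le\ \frac1T\sum_{t=0}^{T-1}\bigl(F(x^t,y)-F(x,y^t)\bigr),
\]
so $\mathcal G(\hat x^T,\hat y^T)$ is bounded once the right-hand side is controlled uniformly in $(x,y)$. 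Adding and subtracting $\frac1n(y^t)^\top Ax^t$ inside the bilinear term splits each summand into a primal residual $\frac1n(y^t)^\top A(x^t-x)+g(x^t)-g(x)$ and a dual residual $\frac1n(y-y^t)^\top Ax^t+\frac1n\sum_i\bigl(\phi_i^*(y_i^t)-\phi_i^*(y_i)\bigr)$, which I would handle symmetrically.

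For the primal residual I would apply the three-point (proximal) inequality to the coordinate-$j_t$ step $x^{t+1}_{j_t}=\prox_{\eta_t g_{j_t}}(x^t_{j_t}-\eta_t a_{i_tj_t}y^t_{i_t})$ against the reference $x_{j_t}$, then decompose the linear term $a_{i_tj_t}y^t_{i_t}(x^{t+1}_{j_t}-x_{j_t})$ into the $\mathcal F_t$-measurable part $a_{i_tj_t}y^t_{i_t}(x^t_{j_t}-x_{j_t})$ plus the increment $a_{i_tj_t}y^t_{i_t}(x^{t+1}_{j_t}-x^t_{j_t})$, absorbing the increment into the Bregman term $-\frac1{2\eta_t}(x^{t+1}_{j_t}-x^t_{j_t})^2$ via Young's inequality at the price of $\tfrac{\eta_t}{2}(a_{i_tj_t}y^t_{i_t})^2$. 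Taking the conditional expectation over $(i_t,j_t)$, uniform and independent given $\mathcal F_t$: the $\mathcal F_t$-measurable part averages to $\frac1{nd}(y^t)^\top A(x^t-x)$ plus $\frac1d\bigl(g(x^t)-g(x)\bigr)$ (the $\tfrac1d$ being exactly the coordinate dilution), the coordinate-wise distances recombine into $\frac1{2\eta_t}\bigl(\|x^t-x\|^2-\mathbb E[\|x^{t+1}-x\|^2\mid\mathcal F_t]\bigr)$ because $x^{t+1}$ and $x^t$ differ in a single coordinate, the leftover $g$-terms telescope, and $\mathbb E[(a_{i_tj_t}y^t_{i_t})^2\mid\mathcal F_t]=\frac1{nd}\sum_i(y_i^t)^2\|a_i\|^2\le\frac{R^2L^2}{d}$ using $\|a_i\|\le R$ and $|y_i^t|\le L$ (the latter because $y_i^t$ stays in $Y_i\subset[-L,L]$ by $L$-Lipschitzness of $\phi_i$, cf.\ Lemma~\ref{lemma:dual_bound}). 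Multiplying by $d$ gives an expected-primal-residual bound of the form $\frac d{2\eta_t}\bigl(\mathbb E\|x^t-x\|^2-\mathbb E\|x^{t+1}-x\|^2\bigr)+\frac{\eta_t R^2L^2}{2}-d\,\mathbb E\bigl(g(x^{t+1})-g(x^t)\bigr)$, and the mirror-image computation for the dual residual (now invoking $\|x^t\|\le D$ and $\|a_j'\|\le R'$) produces $\frac d{2\tau_t}(\cdots)+\frac{\tau_t(R')^2D^2}{2n}-(\text{a telescoping }\phi^*\text{ term})$.

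Summing over $t$ and taking total expectations, the leftover $g$-telescope equals $d\bigl(g(x^0)-\mathbb E g(x^T)\bigr)\le0$ since $x^0=\argmin_{x\in X}g(x)$ (and symmetrically the $\phi^*$-telescope is $\le0$ since $y_i^0=\argmin_{y_i\in Y_i}\phi_i^*(y_i)$), so both are discarded; the squared-distance terms telescope with the decreasing weights $1/\eta_t$, and since $\|x^t-x\|^2\le(2D)^2$ one gets $\sum_t\frac d{2\eta_t}\bigl(\mathbb E\|x^t-x\|^2-\mathbb E\|x^{t+1}-x\|^2\bigr)\le\frac{2dD^2}{\eta_{T-1}}$ (likewise $\le\frac{2dnL^2}{\tau_{T-1}}$ on the dual side, using $\|y^t-y\|^2\le4nL^2$); and the Young prices sum to $\frac{R^2L^2}{2}\sum_{t<T}\eta_t$ and $\frac{(R')^2D^2}{2n}\sum_{t<T}\tau_t$ with $\sum_{t=0}^{T-1}(t+1)^{-1/2}\le2\sqrt T$. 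Inserting the prescribed $\eta_t,\tau_t$ balances the ``bias'' term ($\propto1/\eta_{T-1}$) against the ``variance'' term ($\propto\sum_t\eta_t$) in each of the primal and dual contributions, and dividing by $T$ yields \eqref{eq:converge_1}. To pass from fixed $(x,y)$ to the actual primal--dual gap, where the worst-case $(x,y)$ depends on the iterates, one invokes the standard device of replacing the fixed reference in the dual (resp.\ primal) analysis by an auxiliary comparison sequence driven by the full gradients at $x^t$ (resp.\ $y^t$), as in Nemirovski-style analyses; this does not change the rate.

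The step I expect to be the main obstacle is precisely the conditional-expectation accounting just sketched: one must keep \emph{both} $i_t$ and $j_t$ averaged so that the full bilinear form reappears with the correct $\tfrac1{nd}$ prefactor, while simultaneously recombining the single-coordinate updates into honest $\ell_2$ distances; the $\tfrac1d$ that survives is exactly what forces $\eta_t=\Theta(\sqrt d)$, $\tau_t=\Theta(n\sqrt d)$ and injects the extra $\sqrt d$ into the rate. A minor but essential point is the sign of the $g$- and $\phi^*$-telescopes, which is why the algorithm is initialized at the minimizers of $g$ and of the $\phi_i^*$.
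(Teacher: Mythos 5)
Your proposal follows essentially the same route as the paper's proof: a one-step proximal inequality for the sampled coordinate, Young/Cauchy--Schwarz to absorb the increment at the price of $\eta_t(a_{i_tj_t}y_{i_t}^t)^2$ with $|y_i^t|\le L$ from Lipschitzness, conditional expectation over the independent pair $(i_t,j_t)$ to recover the full bilinear form with the $1/d$ dilution, discarding the leftover $g$- and $\phi^*$-telescopes via the initialization at their minimizers, and balancing the $1/\eta_{T-1}$ terms against $\sum_t\eta_t$ with the prescribed step sizes. The only divergence is your closing remark about introducing a ghost/comparison sequence to handle the fact that the worst-case $(x,y)$ in the gap depends on the iterates — a subtlety the paper silently elides by exchanging $\sup$ and $\mathbb{E}$ — so your version is, if anything, the more careful one.
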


Note that when the problem is high-dimensional, i.e., $d\geq n$, it usually holds that $R\geq R'$.
In this case, Theorem \ref{thm:alg1_non_strongly} implies that SPD1 requires
\begin{equation}
\label{eq:complexity_alg1}
\O\left(\frac{dL^2D^2R^2}{\epsilon^2}\right)
\end{equation}
iterations to ensure that the primal-dual gap is smaller than $\epsilon$.

Under the same assumptions, if we directly apply the classical result by Nemirovski et al. \cite{nemirovski2009robust} for PSGD on the primal problem \eqref{eq:primal}, the number of iterations needed by PSGD is
\[
\O\left(\frac{L^2D^2R^2}{\epsilon^2}\right),
\]
in order to reduce the primal sub-optimality to be smaller than $\epsilon$. Considering that each iteration of PSGD costs $\O(d)$ computation, its overall complexity is actually the same as \eqref{eq:complexity_alg1}.

If we further impose strong convexity and smoothness assumptions, we get an improved iteration complexity shown in Theorem \ref{thm:alg1_strongly}.

\begin{theorem}
\label{thm:alg1_strongly}
We assume the same assumptions as in Theorem \ref{thm:alg1_non_strongly}. Moreover, we assume that $g(x)$ is $\mu$-strongly convex ($\mu>0$), and all $\phi_i$ are $(1/\gamma)$-smooth ($\gamma>0$). If the step sizes in SPD1 are chosen as
\[
\eta_t=\frac{2}{\mu(t+4)}\quad\text{and}\quad\tau_t=\frac{2nd}{\gamma(t+4)},
\]
we have the following convergence rate for SPD1:
\begin{align*}
\E{\mathcal{G}(\hat{x}^T,\hat{y}^T)}
\leq \frac{4dD^2\mu + 4L^2\gamma + 2L^2R^2/\mu\cdot \ln(T+4) + 2dD^2R'^2/\gamma\cdot\ln(T+4)}{T}.
\numberthis\label{eq:asymptotic}
\end{align*}
\end{theorem}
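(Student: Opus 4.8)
The plan is to run a primal--dual (bilinear saddle point) argument of the Nemirovski/Chambolle--Pock type, adapted to the doubly-randomized coordinate-and-sample structure, using strong convexity through the $1/(t+4)$ step-size schedule. Fix an arbitrary comparison pair $(x,y)\in X\times Y$; at the end we take $\sup_y$ and $\inf_x$. The first ingredient is the three-point (proximal) inequality for each coordinate update. Separability of $g$ forces each $g_j$ to be $\mu$-strongly convex, and $(1/\gamma)$-smoothness of $\phi_i$ forces each $\phi_i^*$ to be $\gamma$-strongly convex, so the subproblems defining $x^{t+1}_{j_t}$ and $y^{t+1}_{i_t}$ are $(1+\eta_t\mu)$- and $(1+\tau_t\gamma/d)$-strongly convex respectively; writing their optimality conditions against $x_{j_t}$ and $y_{i_t}$ gives, on the primal side,
\[
\eta_t a_{i_tj_t}y^t_{i_t}(x^{t+1}_{j_t}-x_{j_t})+\eta_t\bigl(g_{j_t}(x^{t+1}_{j_t})-g_{j_t}(x_{j_t})\bigr)\le \tfrac12(x_{j_t}-x^t_{j_t})^2-\tfrac{1+\eta_t\mu}{2}(x_{j_t}-x^{t+1}_{j_t})^2-\tfrac12(x^{t+1}_{j_t}-x^t_{j_t})^2,
\]
and the analogous inequality on the dual side with $\tau_t$, $\gamma/d$, and a sign flip inherited from the $\max$.

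Next I would split each bilinear term into a drift part at the current iterate plus a noise part, e.g. $a_{i_tj_t}y^t_{i_t}(x^{t+1}_{j_t}-x_{j_t})=a_{i_tj_t}y^t_{i_t}(x^t_{j_t}-x_{j_t})+a_{i_tj_t}y^t_{i_t}(x^{t+1}_{j_t}-x^t_{j_t})$, and absorb the noise part into the term $-\tfrac12(x^{t+1}_{j_t}-x^t_{j_t})^2$ by Young's inequality, at the cost of a residual $\tfrac{\eta_t^2}{2}(a_{i_tj_t}y^t_{i_t})^2$ (and $\tfrac{\tau_t^2}{2}(a_{i_tj_t}x^t_{j_t})^2$ on the dual side). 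Then take the conditional expectation over the independent uniform draw $(i_t,j_t)$. Three elementary facts are used: (i) the coordinate squared distances average back to full norms, $\E{(x_{j_t}-x^t_{j_t})^2}=\tfrac1d\|x^t-x\|^2$ and $\E{(x_{j_t}-x^{t+1}_{j_t})^2}=\E{\|x^{t+1}-x\|^2}-\tfrac{d-1}{d}\|x^t-x\|^2$, with the analogues involving $n$ on the dual side; (ii) after adding a multiple of the primal inequality to a multiple of the dual inequality, the drift terms together with the $g_{j_t}$- and $\phi^*_{i_t}$-increments reassemble into $F(x^t,y)-F(x,y^t)$ plus telescoping remainders in $g(x^{t+1})$ and $\tfrac1n\sum_i\phi^*_i(y^{t+1}_i)$; (iii) the residuals are controlled by $\E{(a_{i_tj_t}y^t_{i_t})^2}\le L^2R^2/d$ and $\E{(a_{i_tj_t}x^t_{j_t})^2}\le D^2R'^2/(nd)$, using $|y^t_i|\le L$ (Lemma~\ref{lemma:dual_bound}), $\|x^t\|\le D$, $\sum_j a_{ij}^2=\|a_i\|^2\le R^2$, and $\sum_i a_{ij}^2=\|a'_j\|^2\le R'^2$.

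The decisive step is the choice of how to weight and combine the two inequalities before summing. Because $\tau_t/\eta_t=nd\mu/\gamma$ is independent of $t$, one can scale the primal inequality by a factor $\propto\mu(t+4)$ and the dual inequality by a factor $\propto\gamma(t+4)$ so that the two scaled step sizes coincide; this cancels the spurious cross term $\tfrac1n(y^t)^\top Ax^t$ (which has no telescoping structure of its own), it renders the coefficient in front of $F(x^t,y)-F(x,y^t)$ constant in $t$, and it turns each per-step residual into a quantity proportional to $\eta_t$ and $\tau_t$. Summing over $t=0,\dots,T-1$: the $\|x^t-x\|^2$ and $\|y^t-y\|^2$ coefficients telescope, the strong-convexity terms $\tfrac{\eta_t\mu}{2}\|x^{t+1}-x\|^2$ and $\tfrac{\tau_t\gamma}{2d}\|y^{t+1}-y\|^2$ being precisely what offsets the linear growth of the weights (the small slack from the $\tfrac{d-1}{d}$, $\tfrac{n-1}{n}$ sampling factors is what fixes the precise constants, in particular the $1/d$ inside the dual prox and the shift by $4$); the $g$- and $\phi^*$-remainders telescope to boundary terms; and the residual sums become $\O(1)\cdot\tfrac{L^2R^2}{\mu}\sum_t\tfrac1{t+4}=\O\bigl(\tfrac{L^2R^2}{\mu}\ln(T+4)\bigr)$ and $\O(1)\cdot\tfrac{dD^2R'^2}{\gamma}\sum_t\tfrac1{t+4}=\O\bigl(\tfrac{dD^2R'^2}{\gamma}\ln(T+4)\bigr)$, which is the source of the $\ln(T+4)$ factors. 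Dividing by the total ($\Theta(T)$) sum of the constant gap coefficients and using the uniform averaging together with convexity of $F(\cdot,y)$ and concavity of $F(x,\cdot)$ gives $\E{F(\hat x^T,y)-F(x,\hat y^T)}\le\tfrac1T(\text{right-hand side})$; bounding the $(x,y)$-dependent boundary terms uniformly ($\|x^0-x\|\le 2D$, $\|y^0-y\|^2\le 4nL^2$, and $g$, $\sum_i\phi^*_i$ bounded on the compact feasible sets, since $x^0,y^0$ are the unconstrained minimizers) and taking $\sup_y$, $\inf_x$ yields the claimed bound on $\E{\mathcal G(\hat x^T,\hat y^T)}$.

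The step I expect to be the genuine obstacle is exactly this telescoping/weighting. The three requirements --- diminishing step sizes (so strong convexity is exploited), $t$-dependent combination weights (so the gap coefficient is constant and the uniform average is admissible), and the inevitable $\tfrac{d-1}{d}$, $\tfrac{n-1}{n}$ losses from updating a single coordinate per step --- pull in different directions, and making the coefficients of $\|x^t-x\|^2$ and $\|y^t-y\|^2$ telescope to an $\O(1)$ boundary term (rather than something growing with $T$) is what forces the particular form of the step sizes, including the $1/d$ buried in the dual proximal mapping; getting this accounting exactly right, together with the boundary terms from the function-value telescoping, is the technical heart of the proof. The remaining pieces --- the prox inequalities, the Young splitting, the expectation identities, and the residual bounds --- are routine.
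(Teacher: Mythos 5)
Your proposal is correct and follows essentially the same route as the paper: a per-coordinate three-point proximal inequality with the strong-convexity gain, Young's splitting of the bilinear term giving residuals bounded via $|y^t_i|\le L$, $\|x^t\|\le D$ and the row/column norms $R,R'$, conditional expectation over $(i_t,j_t)$, cancellation of the $\tfrac1n y^{t\top}Ax^t$ cross term, telescoping of the distance terms under the $1/(t+4)$ schedule, dropping the $g$- and $\phi^*$-remainders because $x^0,y^0$ minimize them, and uniform averaging with convexity--concavity of $F$. The ``$t$-dependent weighting'' you single out as the decisive step is exactly the paper's normalization of each one-step inequality by $1/\eta_t$ and $1/\tau_t$ (with a constant $d/n$ rescaling of the dual inequality), so there is no substantive difference in approach.
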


Comparing with the classical convergence rate result of PSGD, we note that the convergence rate of SPD1 not only depends on $\mu$, but also depends on the dual strong convexity parameter $\gamma$. This is reasonable because SPD1 has stochastic updates for both primal and dual variables, and this is why $\gamma>0$ is necessary for ensuring the $\O(\ln T/T)$ convergence rate. {Furthermore, we believe that the factor $\ln T$ is removable in \eqref{eq:asymptotic} so $\O(1/T)$ convergence rate can be obtained, by applying more sophisticated analysis technique such as optimal averaging \cite{shamir2013stochastic}. We do not dig into this to keep the paper more succinct.}

\subsection{Iteration Complexity of SPD1-VR}
\label{sec:analysis_2}

In this subsection, we analyze the iteration complexity of SPD1-VR (Algorithm \ref{alg:2}). Before stating the main result, we first introduce the notion of condition number. When each $\phi_i$ is $(1/\gamma)$-smooth, and $g(x)$ is $\mu$-strongly convex, the condition number of the primal problem \eqref{eq:primal} defined in the literature is (see, e.g., \cite{zhang2017stochastic}):
\[
\kappa = \frac{R^2}{\mu\gamma}.
\]
Here we also define another condition number:
\[
\kappa' = \frac{dR'^2}{n\mu\gamma}.
\]
Since $R$ is the maximum row norm and $R'$ is the maximum column norm of data matrix $A\in\R^{n\times d}$, usually $R^2$ and $(d/n)R'^2$ should be in the same order, which means $\kappa'\approx\kappa$. Without loss of generality, we assume that $\kappa\geq1$ and $\kappa'\geq1$.

\begin{theorem}
\label{thm:alg2}
Assume each $\phi_i$ is $(1/\gamma)$-smooth ($\gamma>0$), and $g(x)$ is $\mu$-strongly convex ($\mu>0$). If we choose the step sizes in SPD1-VR as
\begin{equation}
\label{eq:step_size}
\eta=\frac{\gamma}{128R^2}\cdot\min\left\{\frac{d\kappa}{n\kappa'},1\right\}
\quad\text{and}\quad
\tau=\frac{n\mu}{128R'^2}\cdot\min\left\{\frac{n\kappa'}{d\kappa},1\right\},
\end{equation}
and let $T\geq c\cdot \max\{d\kappa,n\kappa'\}$ for some uniform constant $c>0$ independent of problem setting, SPD1-VR converges linearly in expectation:
\[
\E{\Delta_K}\leq \left(\frac{3}{5}\right)^K\cdot \Delta_0,
\]
where
\[
\Delta_k = \frac{\|\tilde{x}^k-x^*\|^2}{\eta} + \frac{\|\tilde{y}^k-y^*\|^2}{\tau}
\]
and $(x^*,y^*)$ is a pair of primal-dual optimal solution to \eqref{eq:primal_dual}.
\end{theorem}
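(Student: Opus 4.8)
The plan is a Lyapunov argument built on the potential $\Delta_k = \|\tilde x^k - x^*\|^2/\eta + \|\tilde y^k - y^*\|^2/\tau$, proving the one--epoch contraction $\mathbb{E}\bigl[\Delta_{k+1}\mid \tilde x^k,\tilde y^k\bigr] \le \tfrac35 \Delta_k$, from which the theorem follows by chaining over $k$. Fix an outer index $k$ and suppress it. Write $z^t=(x^t,y^t)$ and $\bar z^t=(\bar x^t,\bar y^t)$ for the inner iterates, $z^0=\tilde z:=(\tilde x,\tilde y)$, and $\Phi^t = \|x^t-x^*\|^2/\eta + \|y^t-y^*\|^2/\tau$, so $\Delta_k=\Phi^0$ and $\Delta_{k+1}=\Phi^T$; let $\F_t$ be the $\sigma$-field generated by all randomness before inner step $t$. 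The first ingredient is unbiasedness: besides $z^t$ and $\bar z^t$ I would carry the ``full predictor'' $\hat z^t$, whose $i$-th dual coordinate is the value $\bar y^t$ would take had $i_t=i$ been drawn and, analogously, whose $j$-th primal coordinate is the would-be $\bar x^t_j$; using the independence of the two index pairs $(i_t,j_t)$, $(i'_t,j'_t)$, the variance-reduced corrector directions $a_{i_tj_t}(\bar y^t_{i_t}-\tilde y_{i_t})+G^k_{x,j_t}$ (and their dual analogue) are, conditionally on $j_t$ (resp.\ $i_t$), unbiased estimators of the corresponding coordinates of $\tfrac1n A^\top\hat y^t$ (resp.\ $\tfrac1d A\hat x^t$), while the predictor directions are unbiased for the corresponding coordinates of the full gradients at $z^t$.

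The core is a one--inner--step bound on $\mathbb{E}[\Phi^{t+1}\mid\F_t]$. For the single updated primal coordinate $j_t$, apply the standard prox (three--point) inequality, using that $g_{j_t}$ is $\mu$-strongly convex (separability transfers strong convexity coordinatewise) and testing at $x^*_{j_t}$:
\[
(1+\eta\mu)(x^{t+1}_{j_t}-x^*_{j_t})^2 \le (x^t_{j_t}-x^*_{j_t})^2 - (x^{t+1}_{j_t}-x^t_{j_t})^2 - 2\eta\bigl( g_{j_t}(x^{t+1}_{j_t})-g_{j_t}(x^*_{j_t}) + u^t_{j_t}(x^{t+1}_{j_t}-x^*_{j_t})\bigr),
\]
with $u^t_{j_t}$ the corrector direction, and write the analogous inequalities for the dual coordinate $i_t$ (using $\gamma$-strong convexity of $\phi^*_{i_t}$ and prox-parameter $\tau/d$, giving a factor $1+\tau\gamma/d$) and for the predictor step. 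Taking conditional expectation over $i_t,j_t,i'_t,j'_t$, only a $1/d$ (resp.\ $1/n$) fraction of the per-coordinate primal (resp.\ dual) decrease is realized, and the directional terms turn into full gradients at $\hat z^t$. Adding the primal and dual contributions to $\mathbb{E}[\Phi^{t+1}\mid\F_t]$, the bilinear cross terms cancel in pairs (monotonicity of the saddle operator together with first-order optimality of $(x^*,y^*)$); the step sizes \eqref{eq:step_size} are calibrated precisely so that $\eta\mu = \tau\gamma/n$ --- one checks $\min\{d\kappa/(n\kappa'),1\}=\min\{R^2/R'^2,1\}$ --- which matches the primal and dual contraction rates up to the $1/d$ versus $1/n$ factors and aligns the coefficients of these cross terms. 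The remaining first-order terms then assemble, by convexity of $g$ and $\phi^*$ and optimality of $(x^*,y^*)$, into a nonnegative saddle-gap quantity at $\hat z^t$ which, via strong convexity, dominates $\tfrac{\eta\mu}{2}\hat\Phi^t$ where $\hat\Phi^t=\|\hat x^t-x^*\|^2/\eta+\|\hat y^t-y^*\|^2/\tau$.

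Two error contributions survive. The extragradient error is bounded by Young's inequality after estimating $\|\hat z^t-z^{t+1}\|$ by $\eta,\tau$ times the difference of the predictor and corrector directions, which --- the bilinear part of the operator being Lipschitz with entrywise constant $O(R)$, $O(R')$ --- is $O(\eta R,\tau R')$ times $\|\hat z^t-z^t\|$; since $\eta R^2\le \gamma/128$ and $\tau R'^2\le n\mu/128$, these pieces are absorbed into the negative $-\|z^{t+1}-z^t\|^2$ and $-\|\hat z^t-z^t\|^2$ terms. The variance of the corrector directions is controlled via $\mathbb{E}_{i_t}\bigl[|a_{i_tj_t}(\hat y^t_{i_t}-\tilde y_{i_t})|^2\bigr]=\tfrac1n\sum_i a_{ij_t}^2(\hat y^t_i-\tilde y_i)^2$; averaging over $j_t$ and using $\max_i\|a_i\|=R$ gives at most $\tfrac{R^2}{dn}\|\hat y^t-\tilde y\|^2\le\tfrac{2R^2}{dn}\bigl(\|\hat y^t-y^*\|^2+\|\tilde y-y^*\|^2\bigr)$, and symmetrically on the dual side. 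Weighted into $\mathbb{E}[\Phi^{t+1}\mid\F_t]$, the $\|\hat z^t-z^*\|^2$ part of this is small (again thanks to the $128$) and is absorbed into $-\tfrac{\eta\mu}{2}\hat\Phi^t$, while the $\|\tilde z-z^*\|^2=\|z^0-z^*\|^2$ part leaves a residual at most $\tfrac{c_1}{M}\Phi^0$ with $M:=\max\{d\kappa,n\kappa'\}$. Collecting everything gives
\[
\mathbb{E}[\Phi^{t+1}\mid\F_t]\le\Bigl(1-\tfrac1{c_0M}\Bigr)\Phi^t + \tfrac{c_1}{M}\Phi^0
\]
for universal constants $c_0,c_1$. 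Iterating over $t=0,\dots,T-1$ yields $\mathbb{E}[\Phi^T]\le(1-1/(c_0M))^T\Phi^0 + c_0c_1\Phi^0$; choosing the universal constant $c$ in $T\ge cM$ large enough that $(1-1/(c_0M))^{cM}\le\tfrac13$, while the $128$ in \eqref{eq:step_size} makes $c_0c_1\le\tfrac15$, gives $\mathbb{E}[\Phi^T]\le\tfrac35\Phi^0$, which is the claimed epoch contraction.

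I expect the main obstacle to be the simultaneous bookkeeping rather than any single inequality: one must carry the coordinate-sampling factors $1/d$, $1/n$, the asymmetric weights $1/\eta$, $1/\tau$, the prox constants $1+\eta\mu$ and $1+\tau\gamma/d$, the extragradient correction, and the variance-reduction residual all at once, reconcile the single-coordinate updates with the ``full predictor'' $\hat z^t$ used to obtain unbiasedness, and check that the precise formulas \eqref{eq:step_size} --- via $\eta\mu=\tau\gamma/n$ and $\eta R^2\le\gamma/128$, $\tau R'^2\le n\mu/128$ --- are exactly what makes the bilinear cross terms cancel and every error term be dominated with slack enough for the factor $3/5$.
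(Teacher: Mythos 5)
Your plan follows essentially the same route as the paper's proof: the ``full predictor'' $\hat z^t$ is exactly the paper's imaginary iterates $(x'^t,y'^t)$, and the three-point prox inequalities with coordinatewise strong convexity, the Young-inequality absorption of the extragradient error, the variance bound split around $z^*$ leaving a $\Theta(1/\max\{d\kappa,n\kappa'\})\,\Delta_k$ residual, and the final SVRG-style recursion over the inner loop all play the same roles as in the paper's Lemmas \ref{lemma:grad_variance} and \ref{lemma:alg2_one_step}. The only cosmetic difference is that the paper absorbs the $\|z^t-z^*\|^2$ portion of the variance into the prox contraction coefficient and simply discards the nonnegative saddle gap $F(x'^t,y^*)-F(x^*,y'^t)\ge 0$, whereas you absorb the $\|\hat z^t-z^*\|^2$ portion into that gap's strong-convexity surplus; both bookkeepings close with the stated step sizes.
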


This theorem implies that we need $\O(\log(1/\epsilon))$ outer loops, or $\O(\max\{d\kappa,n\kappa'\}\log(1/\epsilon))$ inner loops to ensure $\E{\Delta_k}\leq \epsilon$. Considering that there are $\Theta(nd)$ extra computation cost for computing the full gradient in every outer loop, the total complexity of this algorithm is
\begin{equation}
\label{eq:complexity_alg2}
\O\left((nd + \max\{d\kappa,n\kappa'\})\log\frac{1}{\epsilon} \right).
\end{equation}
As a comparison, the complexity of SVRG under same setting is \cite{xiao2014proximal}:
\begin{equation}
\label{eq:complexity-svrg}
\O\left(d(n+\kappa)\log\frac{1}{\epsilon}\right),
\end{equation}
Since usually it holds $\kappa'\approx\kappa$, $d\kappa$ will dominate $n\kappa'$ when $d> n$. In this case, the two complexity bounds \eqref{eq:complexity_alg2} and \eqref{eq:complexity-svrg} are the same. 

Although the theoretical complexity of SPD1-VR is same as SVRG when $d\geq n$, we empirically found that SPD1-VR is significantly faster than SVRG in high-dimensional problems (see Section \ref{sec:exp}), by allowing much larger step sizes than the ones in \eqref{eq:step_size} suggested by theory. We conjecture that this is due to the power of coordinate descent. Nesterov's seminal work \cite{nesterov2012efficiency} has rigorously proved that coordinate descent can reduce the Lipschitz constant of the problem, and thus allows larger step sizes than gradient descent. However, due to the sophisticated coupling of primal and dual variable updates in our algorithm, our analysis is currently unable to reflect this property.

We point out that the existing accelerated algorithms such as SPDC \cite{zhang2017stochastic} and Katyusha \cite{allen2016variance} have better complexity given by
\begin{equation}\label{eq:complexity_acc}
\O\left(d(n+\sqrt{n\kappa})\log\frac{1}{\epsilon}\right).
\end{equation}
These accelerated algorithms employ Nesterov's extrapolation techniques \cite{NesterovConvexBook2004} to accelerate the algorithms.
We believe that it is also possible to incorporate the same technique to further accelerate SPD1-VR, but we leave this as a future work at this moment.

\section{Experiments}
\label{sec:exp}

In this section, we conduct numerical experiments of our proposed algorithms. Due to space limitation, we only present part of the results here, more experiments can be found in appendix.

Here we consider solving a classification problem with logistic loss function
\[
\phi_i(u)=\log\left(1+\exp\{-b_i u\}\right),
\]
where $b_i\in\{\pm 1\}$ is the class label. Note that this loss function is smooth. {Although this $\phi_i^*$ does not admit closed-form solution for its proximal mapping, following \cite{shalev2013stochastic}, we apply Newton's method to compute its proximal mapping, which can achieve very high accuracy in very few (say, 5) steps. Since the proximal sub-problem of $\phi_i^*$ is a 1-dimensional optimization problem, using Newton's method here is actually quite cheap.} Besides, We use $g(x)=\frac{\lambda}{2}\|x\|^2$ as the regularizer.

We compare our SPD1 and SPD1-VR with some standard stochastic algorithms for solving \eqref{eq:primal}, including PSGD, SVRG and SAGA. We always set $T=nd$ for SPD1-VR and $T=n$ for SVRG, where $T$ is the number
of inner loops in each outer loop.

\subsection{Results on Synthetic Data}
\begin{figure}
  \centering
  \includegraphics[width=0.33\textwidth]{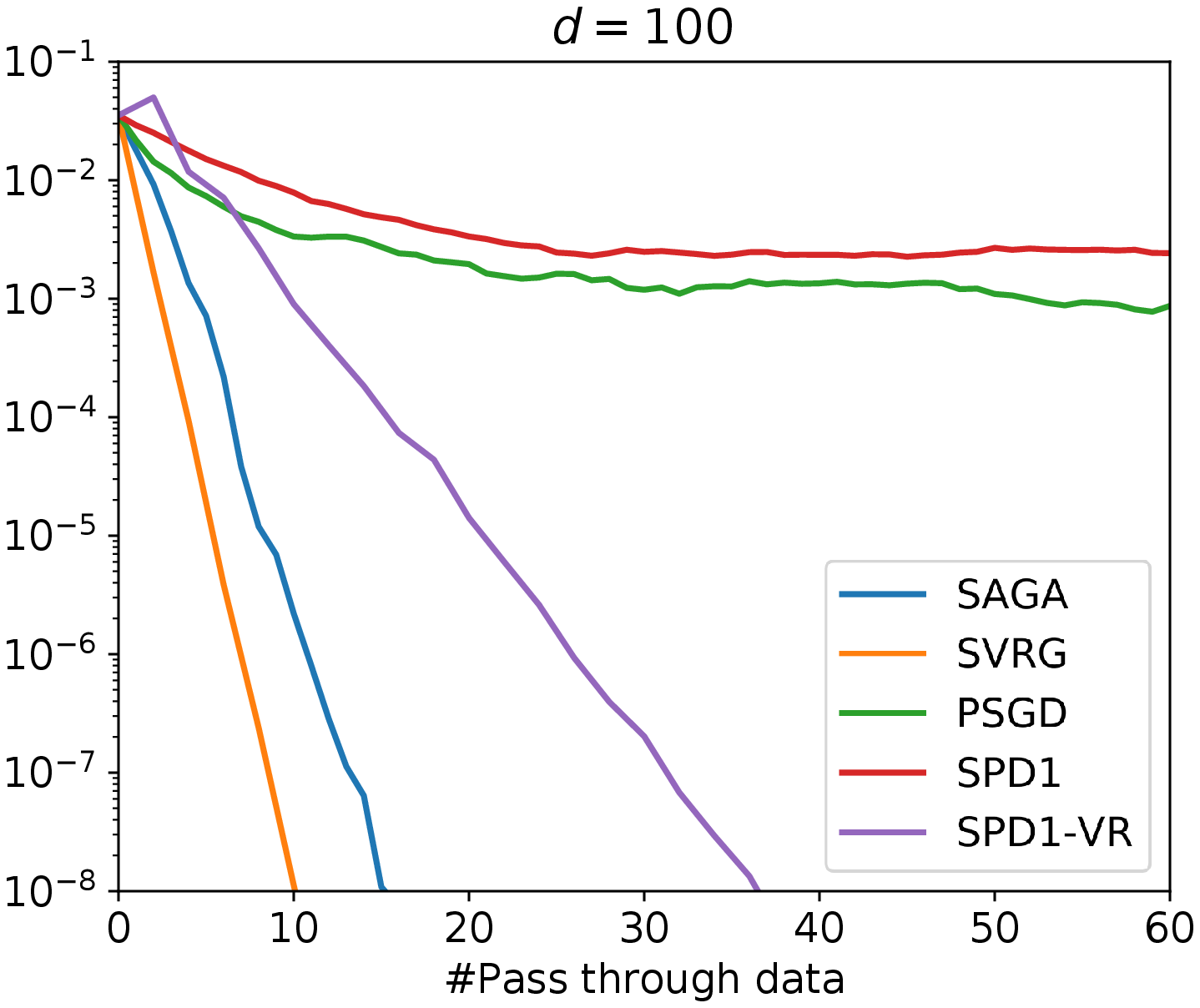}
  \hspace{-0.01\textwidth}
  \includegraphics[width=0.33\textwidth]{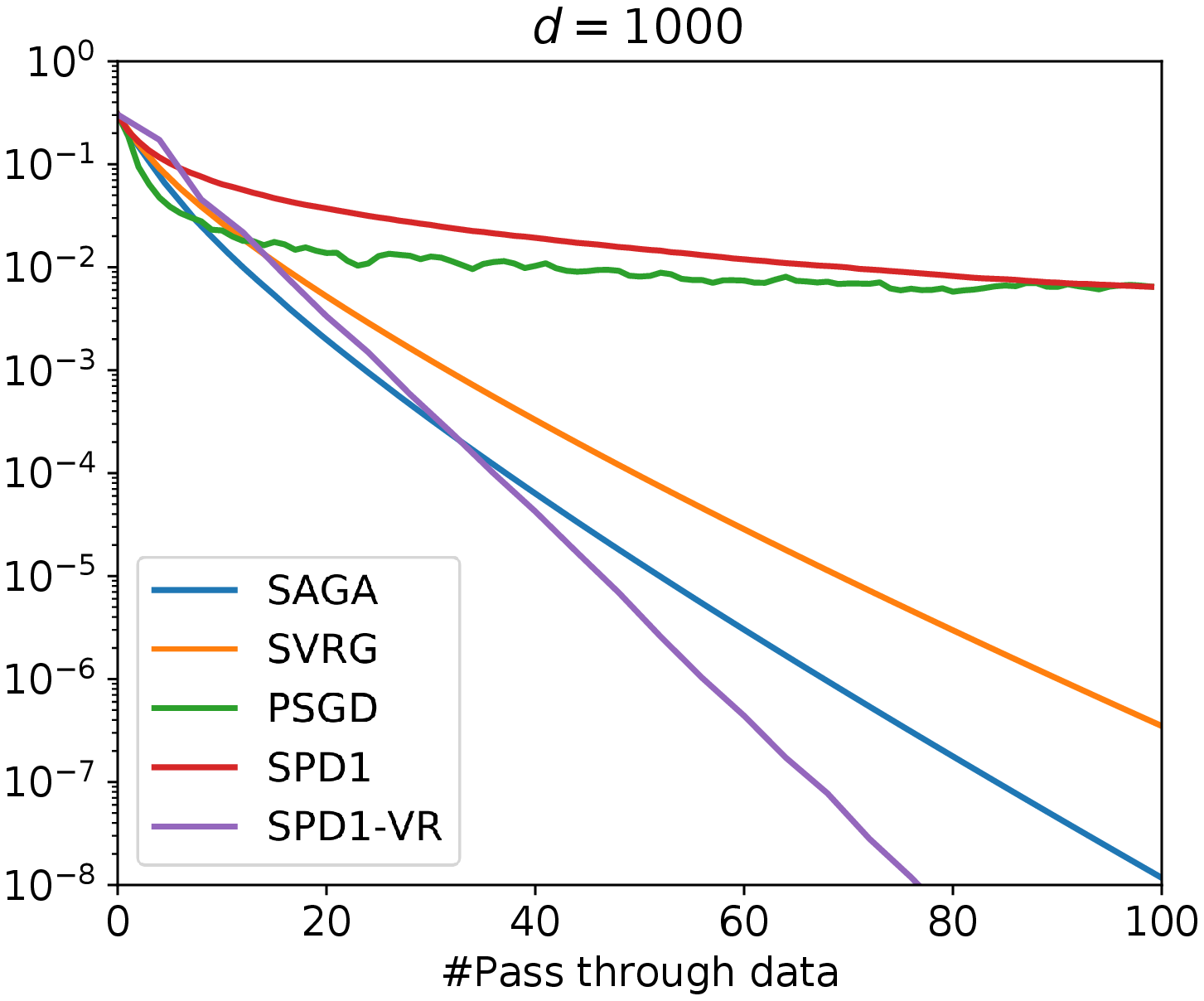}
  \hspace{-0.01\textwidth}
  \includegraphics[width=0.33\textwidth]{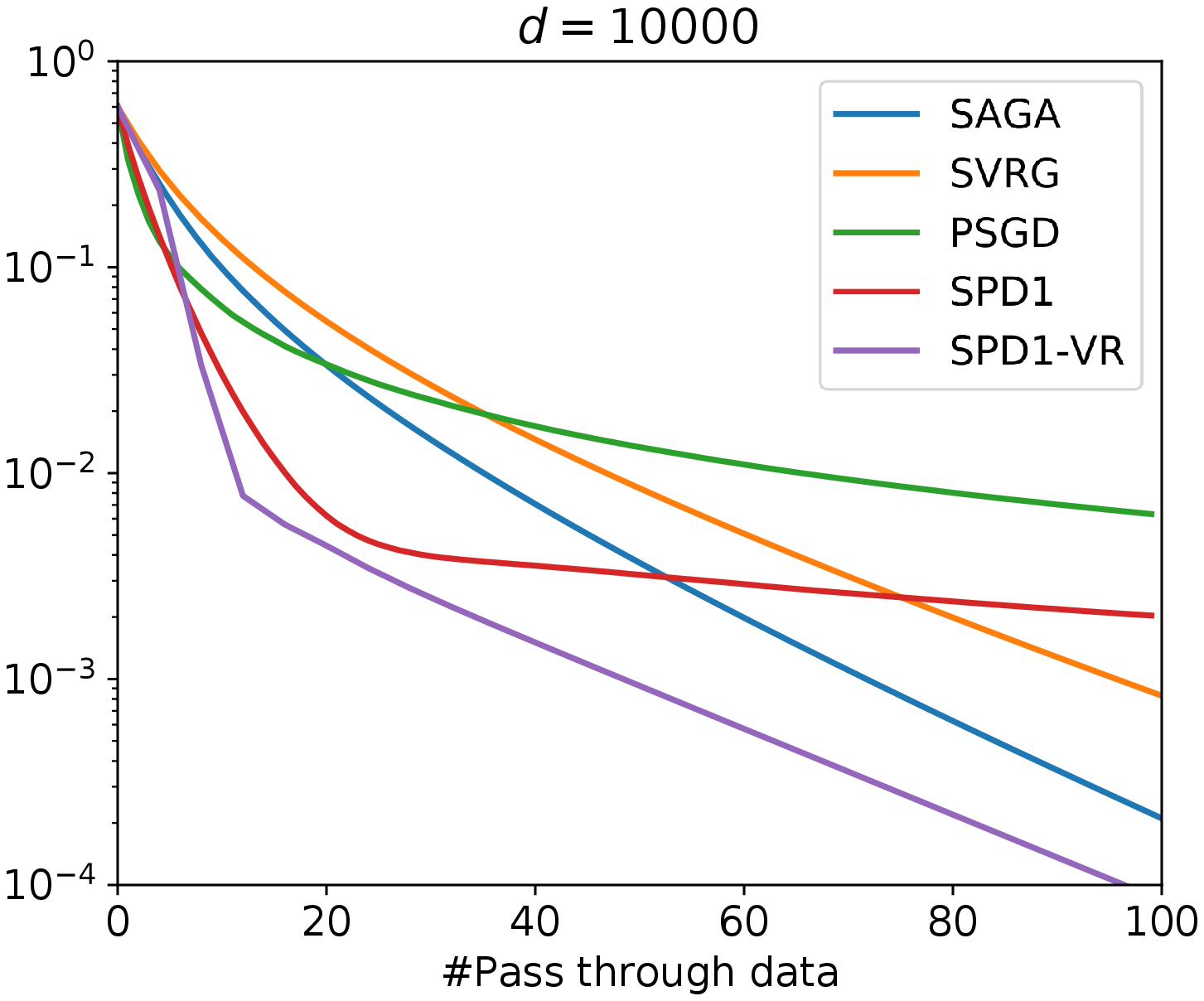}
  \caption{Experimental results on synthetic data. $n$ is set to 1000 in all figures, while $d$ varies from $100$ to $10000$. $\lambda$ is fixed as $\lambda=10^{-3}$. The $y$-axis here is the primal sub-optimality, namely $P(x^t)-P(x^*)$.}\label{fig:synthetic}
\end{figure}

Since our theory in Section \ref{sec:analysis} suggest that the performance of our proposed methods relies on the relationship between $n$ and $d$. Here we will test our methods on synthetic dataset with different $n$ and $d$ to see their effects to the performance.
To generate the data, we first randomly sample the data matrix $A$ and a vector $\bar{x}\in \R^d$ with entries i.i.d. drawn from $\mathcal{N}(0,1)$, and then generate the labels as
\[
b_i=\mathrm{sign}\left(a_i^\top \bar{x} + \varepsilon_i\right),\quad \varepsilon_i\sim \mathcal{N}(0,\sigma^2),
\]
for some constant $\sigma^2>0$. Since the focus here is the relationship between $n$ and $d$, in order to simplify the experiments, we fix $n$ as $n=1000$, but vary the value of $d$ with values chosen from $d\in\{100,1000,10000\}$.

The results are presented in Figure \ref{fig:synthetic}. When $d=100<n$, it is clear that SPD1 is slower than PSGD, and SPD1-VR is also inferior than both SVRG and SAGA. While when $d=1000=n$, even though SPD1 falls behind PSGD at the beginning, their final performance is quite close at last, and SPD1-VR begins to beat both SVRG and SAGA. Finally, when $d>n$, SPD1 becomes obviously faster than PSGD, and SPD1-VR is also significantly better than SVRG and SAGA. This indicates that our algorithms SPD1 and SPD1-VR are preferable in practice for high-dimensional problems.


\subsection{Results on Real Datasets}

In this part, we will demonstrate the efficiency of our proposed methods on real datasets. Here we only focus on the high-dimensional case where $d>n$ or $d\approx n$.

We will test all the algorithms on three real datasets: \texttt{colon-cancer}, \texttt{gisette} and \texttt{rcv1.binary}, downloaded from the LIBSVM website \footnote{\url{www.csie.ntu.edu.tw/~cjlin/libsvmtools/datasets}}. The attributes of these data and $\lambda$ used for each dataset are summarized in Table \ref{table:dataset}.

\begin{table}[t]
\centering
\caption{Summary of datasets}
\label{table:dataset}
\begin{tabular}{|c|c|c|c|}
\hline
Dataset & $n$ & $d$ & $\lambda$ \\ \hline
\texttt{colon-cancer} & 62 & 2,000 & $1$ \\ \hline
\texttt{gisette}	 & 6,000 & 5,000 & $10^{-2}$ \\ \hline
\texttt{rcv1.binary} & 20,242 & 47,236 & $10^{-3}$ \\ \hline
\end{tabular}
\end{table}

The experimental results on these real datasets are shown in Figure \ref{fig:real}. For \texttt{colon-cancer} dataset, where $d$ is much larger than $n$, the performance of SPD1-VR is really dominating over other methods, and SPD1 also performs better than PSGD. For \texttt{gisette} dataset, where $n$ is slightly larger than $d$, SPD1-VR still outperforms all other competitors, but this time SPD1 is slower than PSGD. Besides, for \texttt{rcv1.binary}, both SPD1 and SPD1-VR are better than PSGD and SVRG/SAGA respectively.

{These results on real datasets further confirm that our proposed methods, especially SPD1-VR, are faster than existing algorithms on high-dimensional problems. }

\begin{figure}[t]
  \centering
  \includegraphics[width=0.33\linewidth]{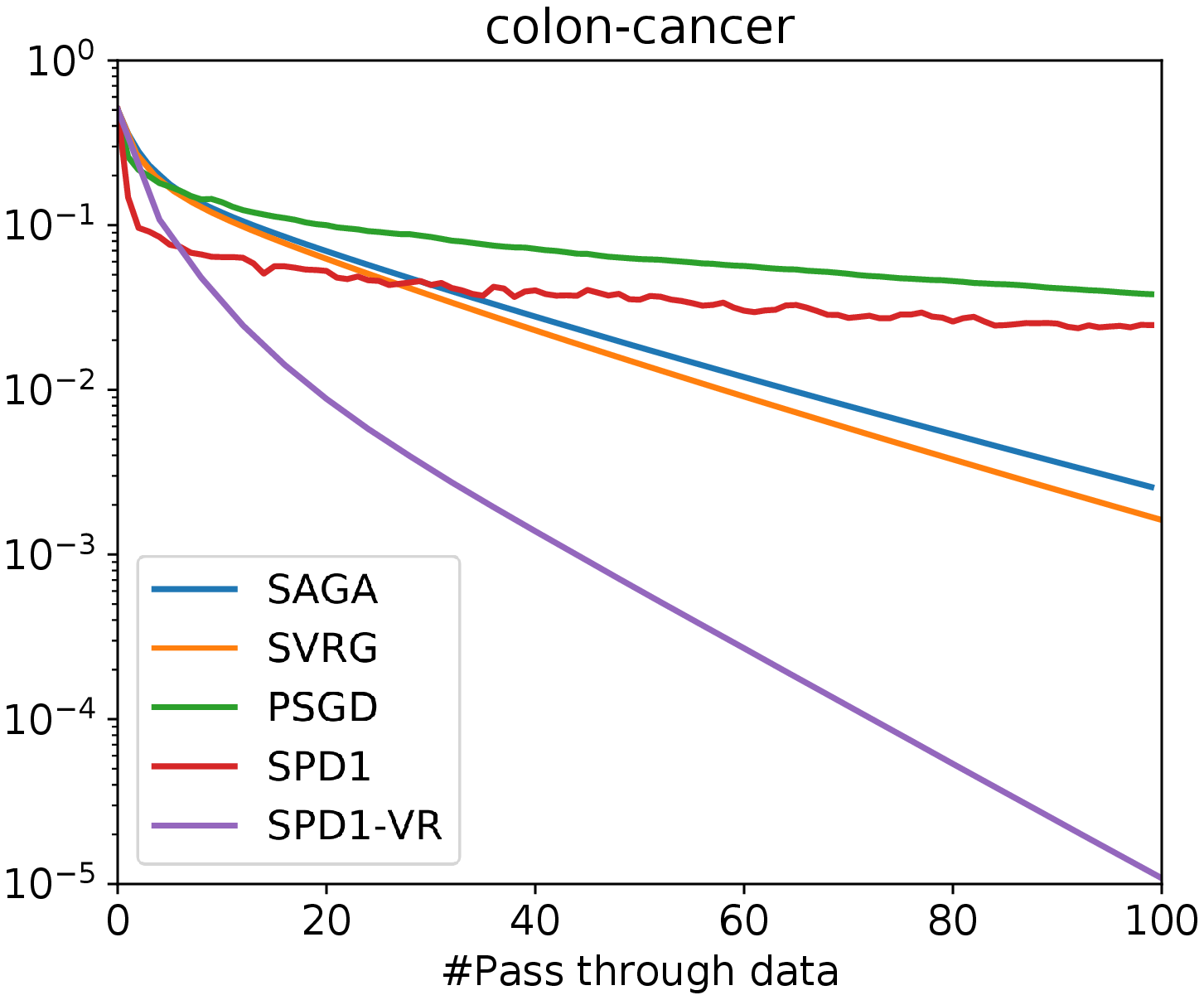}
  \hspace{-0.01\textwidth}
  \includegraphics[width=0.33\linewidth]{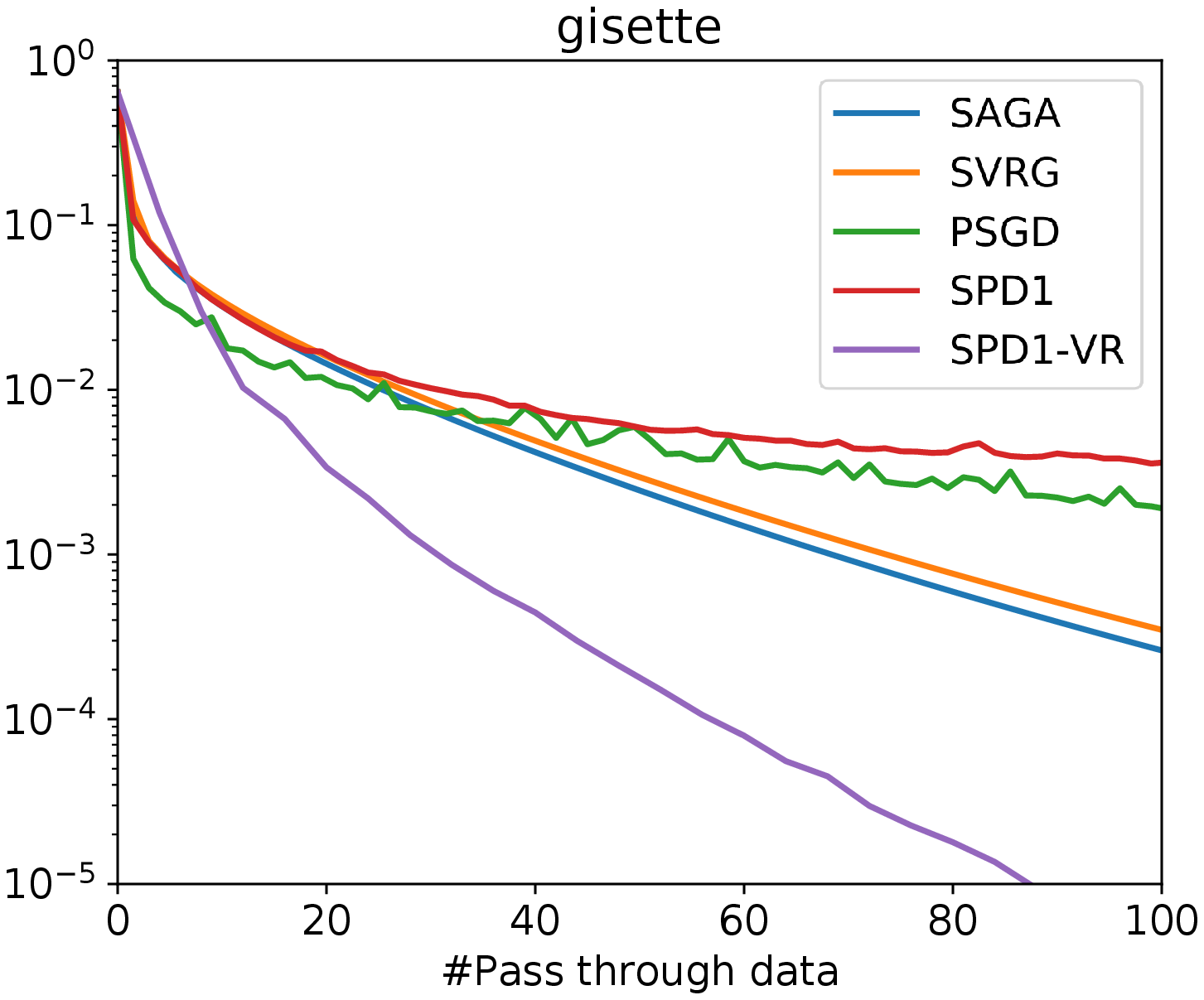}
  \hspace{-0.01\textwidth}
  \includegraphics[width=0.33\linewidth]{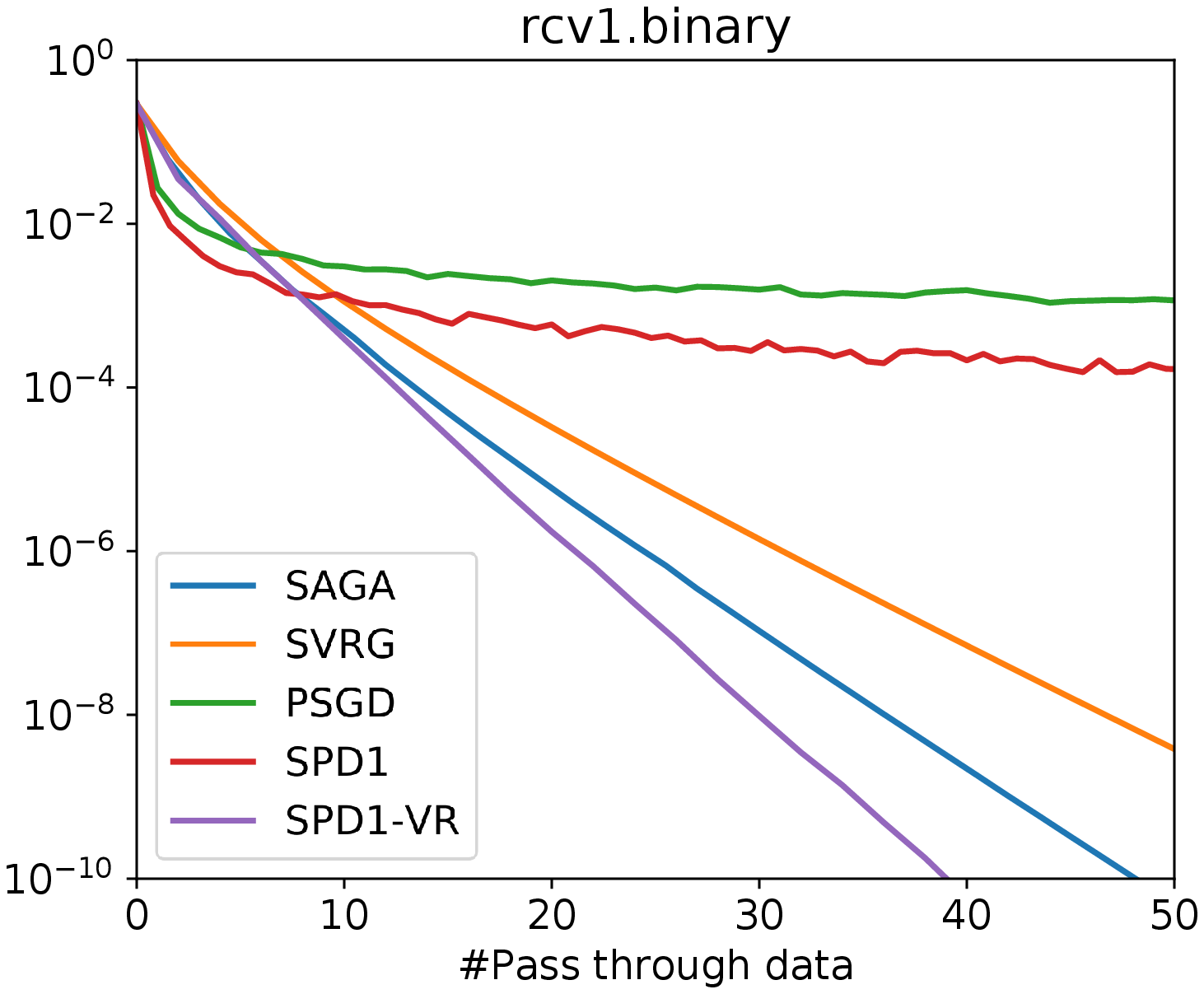}
  \caption{Numerical results on three real datasets. The $y$-axis is also primal sub-optimality.}\label{fig:real}
\end{figure}

\section{Conclusion}
In this paper, we developed two stochastic primal-dual algorithms, named SPD1 and SPD1-VR for solving regularized empirical risk minimization problems. Different from existing methods, our proposed algorithms have a brand-new updating style, which only need to use one coordinate of one sampled data in each iteration. As a result, the per-iteration cost is very low and the algorithms are very suitable for distributed systems. We proved that the overall convergence property of SPD1 and SPD1-VR resembles PSGD and SVRG respectively under certain condition, and empirically showed that they are faster than existing methods such as PSGD, SVRG and SAGA in high-dimensional settings. We believe that our new methods have great potential to be used in large-scale distributed optimization applications.

\newpage


\bibliographystyle{plain}
\bibliography{reference}

\begin{appendices}
\section{Proofs}

\begin{lemma}
\label{lemma:dual_bound}
If $\phi:\R\rightarrow\R$ is $L$-Lipschitz continuous, then $\{y\in\R|\phi^*(y)<\infty\}\subseteq[-L,L]$.
\end{lemma}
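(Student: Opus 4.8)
The plan is to prove the contrapositive: if $|y|>L$, then $\phi^*(y)=+\infty$, so that any $y$ with $\phi^*(y)<\infty$ must lie in $[-L,L]$. The only tool needed is the definition $\phi^*(y)=\sup_{x\in\R}\{yx-\phi(x)\}$ together with the linear growth bound that Lipschitz continuity gives: for all $x\in\R$,
\[
\phi(x)\le \phi(0)+L|x|.
\]
This follows immediately from $|\phi(x)-\phi(0)|\le L|x-0|$.

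First I would treat the case $y>L$. Restricting the supremum to $x\ge 0$ and using $\phi(x)\le\phi(0)+Lx$, I get
\[
\phi^*(y)\ \ge\ \sup_{x\ge 0}\bigl\{yx-\phi(0)-Lx\bigr\}\ =\ \sup_{x\ge 0}\bigl\{(y-L)x\bigr\}-\phi(0)\ =\ +\infty,
\]
since $y-L>0$. Symmetrically, for $y<-L$ I restrict to $x\le 0$, use $\phi(x)\le\phi(0)-Lx$ for $x\le 0$, and obtain
\[
\phi^*(y)\ \ge\ \sup_{x\le 0}\bigl\{(y+L)x\bigr\}-\phi(0)\ =\ +\infty,
\]
since $y+L<0$ forces $(y+L)x\to+\infty$ as $x\to-\infty$. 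Combining the two cases, $\phi^*(y)=+\infty$ whenever $|y|>L$, which is exactly the claimed inclusion.

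There is no real obstacle here; the argument is a two-line computation once the linear growth bound is in hand. The only thing to be careful about is bookkeeping the sign of $y$ and choosing the correct half-line ($x\ge 0$ versus $x\le 0$) over which to drive the objective to $+\infty$; one could also phrase both cases at once via $\phi(x)\le\phi(0)+L|x|$ and the observation that $\sup_{x\in\R}\{yx-L|x|\}=+\infty$ iff $|y|>L$, but splitting into the two signs is the most transparent presentation.
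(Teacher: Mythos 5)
Your proposal is correct and follows essentially the same route as the paper: both derive the linear growth bound $\phi(x)\le\phi(0)+L|x|$ from Lipschitz continuity, lower-bound $\phi^*(y)$ by $\sup_x\{yx-L|x|\}-\phi(0)$, and drive this to $+\infty$ along the appropriate half-line when $|y|>L$. The only difference is that you write out the $y<-L$ case explicitly where the paper dismisses it as symmetric.
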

\begin{proof}
In this proof, we will first show that if $y>L$, then $\phi^*(y)=\infty$. By the Lipschitz continuity, we have
\[
\phi(x)\leq \phi(0)+L|x|.
\]
Plug it into the definition of convex conjugate \eqref{eq:def_conjugate}, we can obtain
\[
\phi^*(y)=\sup_{x\in\R} \left\{y\cdot x - \phi(x)\right\}\geq \sup_{x\in\R} \left\{y\cdot x - L|x|\right\} - \phi(0).
\]
Since $y>L$, we can always let $(y\cdot x - L|x|)$ goes to infinity by set $x\rightarrow +\infty$, which means $\phi^*(y)=\infty$. And the proof for the case $y<-L$ is similar.
\end{proof}

\subsection{Proofs Concerning Algorithm \ref{alg:1}}
In this part, we will use
\[
\F_t=\{i_0,j_0,,\dots,i_{t-1},j_{t-1}\}
\]
to denote all the random variables generated before iteration $t$. And for simplicity, we will denote
\[
\phi^*(y)=\sum_{i=1}^n\phi_i^*(y_i).
\]

The following lemma is the key to prove both Theorem \ref{thm:alg1_non_strongly} and Theorem \ref{thm:alg1_strongly}.
\begin{lemma}
\label{lemma:alg1}
Consider $t$-th iteration of Algorithm \ref{alg:1}. Assume $g(x)$ is $\mu$-strongly convex ($\mu\geq 0$, and $\mu=0$ means general convexity), and all $\phi_i$ are $(1/\gamma)$-smooth ($\gamma\geq 0$).
When conditioned on $\F_t$, it holds that
\begin{align*}
&\frac{d}{2\eta_t}\cdot\left[\left(1 - \min\left\{\frac{\eta_t\mu}{2},\frac{1}{4}\right\}\right)\|x^t - x\|^2 - \E{\|x^{t+1} - x\|^2|\F_t} \right] + \\
&\frac{d}{2\tau_t}\cdot\left[\left(1 - \min\left\{\frac{\tau_t\gamma}{2d},\frac{1}{4}\right\}\right)\|y^t - y\|^2 - \E{\|y^{t+1} - y\|^2| \F_t} \right] \\
\geq& F(x^t,y) - F(x,y^t) - \eta_t L^2R^2 - \frac{\eta_t D^2R'^2}{n} \\
    &+ d\cdot\left[\E{g(x^{t+1})|\F_t} - g(x^t) \right] + \E{\phi^*(y^{t+1})|\F_t} -  \phi^*(y^t)
\numberthis \label{eq:alg1_lemma}
\end{align*}
for any $x\in X$ and $y\in Y$.
\end{lemma}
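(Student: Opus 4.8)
The plan is to analyze the two prox-updates separately, treat each as a one-step progress bound for a proximal stochastic gradient step, then combine them through the bilinear coupling term $\frac{1}{n}y^\top Ax$. First I would fix $x\in X$, $y\in Y$ and condition on $\F_t$. For the primal update, only coordinate $j_t$ changes, so $\|x^{t+1}-x\|^2 - \|x^t-x\|^2 = (x^{t+1}_{j_t}-x_{j_t})^2 - (x^t_{j_t}-x_{j_t})^2$. I would apply the standard prox-inequality: since $x^{t+1}_{j_t} = \prox_{\eta_t g_{j_t}}(x^t_{j_t} - \eta_t a_{i_tj_t}y^t_{i_t})$, the optimality/firm-nonexpansiveness of the prox gives, for the $\mu$-strongly convex $g_{j_t}$,
\[
g_{j_t}(x^{t+1}_{j_t}) + a_{i_tj_t}y^t_{i_t}(x^{t+1}_{j_t}-x_{j_t}) \le g_{j_t}(x_{j_t}) + \frac{1}{2\eta_t}\left[(x^t_{j_t}-x_{j_t})^2 - (x^{t+1}_{j_t}-x_{j_t})^2 - (x^{t+1}_{j_t}-x^t_{j_t})^2\right] - \frac{\mu}{2}(x^{t+1}_{j_t}-x_{j_t})^2,
\]
and symmetrically for the dual update with stepsize $\tau_t/d$ and $(1/\gamma)$-smoothness of $\phi_i$ translating into $\gamma$-strong convexity of $\phi_i^*$. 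I would then rearrange the stale gradient terms $a_{i_tj_t}y^t_{i_t}(x^{t+1}_{j_t}-x_{j_t})$ by splitting $x^{t+1}_{j_t}-x_{j_t} = (x^{t+1}_{j_t}-x^t_{j_t}) + (x^t_{j_t}-x_{j_t})$; the second piece, after taking expectation over $(i_t,j_t)$, reconstructs $\frac1{nd}\sum_{i,j}a_{ij}y^t_i(x^t_j - x_j) = \frac1{nd}(y^t)^\top A(x^t-x)$, which is exactly what is needed to build $F(x^t,y)-F(x,y^t)$ from the bilinear term.

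Next I would handle the cross terms $a_{i_tj_t}y^t_{i_t}(x^{t+1}_{j_t}-x^t_{j_t})$ (primal) and its dual analogue. These get bounded by Young's inequality against the negative quadratic $-\frac{1}{2\eta_t}(x^{t+1}_{j_t}-x^t_{j_t})^2$ produced by the prox inequality: $a_{i_tj_t}y^t_{i_t}(x^{t+1}_{j_t}-x^t_{j_t}) \ge -\frac{1}{2\eta_t}(x^{t+1}_{j_t}-x^t_{j_t})^2 - \frac{\eta_t}{2}a_{i_tj_t}^2(y^t_{i_t})^2$. Taking expectation, $\E[]{a_{i_tj_t}^2(y^t_{i_t})^2|\F_t} = \frac1{nd}\sum_{i,j}a_{ij}^2(y^t_i)^2 \le \frac{1}{nd}\sum_i \|a_i\|^2 L^2 \le \frac{L^2R^2}{d}$, using $|y^t_i|\le L$ from Lemma~\ref{lemma:dual_bound} and the definition of $R$; multiplying by the $d/2$-type prefactor gives the $-\eta_t L^2R^2$ term. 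The dual cross term is bounded the same way but using $\|x^t\|\le D$ and the column norm $R'$: $\E[]{a_{ij_t}^2(x^t_{j_t})^2|\F_t} \le \frac{1}{nd}\sum_j \|a'_j\|^2 D^2 \le \frac{D^2R'^2}{nd}$, producing (after the prefactor) the $-\eta_t D^2R'^2/n$ term — I should double check the exact constants and whether the paper folds a factor into the step size, but the structure is exactly this. The strong-convexity quadratics $-\frac{\mu}{2}(x^{t+1}_{j_t}-x_{j_t})^2$ combine with the $-\frac{1}{2\eta_t}\|x^{t+1}-x\|^2$ term: after taking expectation we get $-\frac{d}{2\eta_t}(1 + \eta_t\mu)\E[]{\|x^{t+1}-x\|^2}$-style expressions, but to get the $\min\{\eta_t\mu/2, 1/4\}$ form one keeps only a fraction of the strong-convexity gain and discards the rest (the $1/4$ cap presumably comes from not being able to use more than the Young-inequality slack allows). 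Similarly for the dual with $\tau_t\gamma/(2d)$.

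Finally I would assemble: the $g$ and $\phi^*$ function-value terms that survive the prox inequalities (at $x^{t+1}_{j_t}$ vs. $x_{j_t}$ for the sampled coordinate, at $x^t_j$ for the rest) combine, upon taking expectation over $j_t$, into $\frac1d\E[]{g(x^{t+1})} - \frac1d g(x^t) + g(x^t) - g(x)$ type expressions; after multiplying by the $d$ prefactor this yields the $d(\E[]{g(x^{t+1})} - g(x^t))$ term plus a clean $-d(g(x^t)-g(x))$ that folds into $F(x^t,y)-F(x,y^t)$, and analogously for $\phi^*$. Collecting the bilinear reconstruction, the $g$, $\phi^*$ and conjugate pieces, and comparing with the definition $F(x,y) = \frac1n y^\top Ax - \frac1n\phi^*(y) + g(x)$ gives the right-hand side $F(x^t,y)-F(x,y^t)$, up to the noise terms and the function-value correction terms, which is exactly \eqref{eq:alg1_lemma}. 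The main obstacle I expect is bookkeeping: correctly tracking which expectations produce the $\frac1{nd}$ averaging factors versus $\frac1n$ or $\frac1d$, making sure the primal and dual prefactors $d/(2\eta_t)$ and $d/(2\tau_t)$ come out with the stated coefficients, and verifying that the slack from the prox quadratics is exactly enough to absorb both the Young-inequality cross terms and a useful fraction of the strong-convexity terms — this is where the somewhat unusual $\min\{\cdot,1/4\}$ clipping enters, and getting that constant right requires care rather than cleverness.
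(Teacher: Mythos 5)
Your proposal follows essentially the same route as the paper's proof: per-coordinate prox-optimality plus strong convexity of $g_{j_t}$ (resp.\ $\gamma$-strong convexity of $\phi_{i_t}^*$), Young's inequality against the prox quadratic to generate the $\eta_t L^2R^2$ and $D^2R'^2$ noise terms via $|y^t_i|\le L$ and $\|x^t\|\le D$, the $\min\{\cdot,\tfrac14\}$ clipping arising exactly as you guessed (merging the leftover $\tfrac12(x^t_{j}-x^{t+1}_{j})^2$ Young slack with the strong-convexity quadratic), and expectation over $(i_t,j_t)$ to reconstruct the bilinear and function-value terms --- the paper merely organizes the $j_t$-expectation through ``imaginary'' full-vector iterates $x'^t,y'^t$ and conditions on $i_t$ first. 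The one slip is your coefficient ``$-d(g(x^t)-g(x))$'' for the piece folding into $F$: the correct combination is $d\,\E{g(x^{t+1})\mid\F_t}-(d-1)g(x^t)-g(x)=d\bigl[\E{g(x^{t+1})\mid\F_t}-g(x^t)\bigr]+\bigl(g(x^t)-g(x)\bigr)$, so the surviving piece enters $F(x^t,y)-F(x,y^t)$ with coefficient $+1$; this is exactly the bookkeeping you flagged and does not affect the structure of the argument.
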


\begin{proof}
Here we define two ``imaginary'' iterates $x'^t$ and $y'^t$ in the following way:
\begin{align*}
x'^t_j =& \prox_{\eta_tg_j}\left(x^t_j - \eta_t a_{i_tj}y_{i_t}^t \right)\quad\forall j\in[d], \\
y'^t_i =& \prox_{(\tau_t/d)\phi_i^*}\left(y^t_i - \tau_t a_{ij_t}x_{j_t}^t \right)\quad\forall i\in[n].
\end{align*}
Note that $x^{t+1}_j=x'^t_j$ when $j=j_t$, and $x^{t+1}_j=x^t_j$ otherwise. And $y^{t+1}_i=y'^t_i$ when $i=i_t$, while $y^{t+1}_i=y^t_i$ when $i\neq i_t$.

For each $j\in[d]$, due to $(a+b)^2-a^2-b^2=2a\cdot b$, we have
\[
(x^t_j - x_j)^2 - (x'^t_j - x_j)^2 - (x^t_j - x'^t_j)^2 = 2(x^t_j - x'^t_j)\cdot(x'^t_j-x_j)
\]
for any $x_j\in X_j$.
By the definition of $x'^t_j$ and the optimality condition of the proximal subproblem, there must exist a subgradient $s\in \partial g_j(x'^t_j)+\partial\mathbbm{1}_{X_j}(x'^t_j)$ such that
\[
x'^t_j = x^t_j - \eta_t a_{i_t j} y^t_{i_t} - \eta_t s,
\]
where $\mathbbm{1}_{X_j}$ is the indicator function of set $X_j$, i.e.,
\[
\mathbbm{1}_{X}(x)=\left\{\begin{array}{ll}
0,\quad&\text{if }x\in X,\\
+\infty,\quad&\text{if }x\notin X.
\end{array}
\right.
\]
Plug in this fact into the previous equality, we have
\[
(x^t_j - x_j)^2 - (x'^t_j - x_j)^2 - (x^t_j - x'^t_j)^2 = 2\eta_t(a_{i_t j}y^t_{i_t} + s)\cdot(x'^t_j-x_j).
\]
Since $g(x)$ is $\mu$-strongly convex, and also because of the separable assumption \eqref{eq:separable}, we know that each $g_j(x_j)$ is also $\mu$-strongly convex. Then we apply strong convexity to subgradient $s$:
\begin{align*}
&(x^t_j - x_j)^2 - (x'^t_j - x_j)^2 - (x^t_j - x'^t_j)^2 \\
=&2\eta(a_{i_t j}y^t_{i_t} + s)\cdot(x'^t_j-x_j) \\
\geq& 2\eta_t a_{i_tj}y^t_{i_t}\cdot(x'^t_j-x_j) + 2\eta_t\left[g_j(x'^t_j) - g_j(x_j) + \frac{\mu}{2}(x'^t_j - x_j)^2 + \mathbbm{1}_{X_j}(x'^t_j) - \mathbbm{1}_{X_j}(x_j)\right].
\end{align*}
Since $x'^t$ and $x$ are always feasible by definition, thus $\mathbbm{1}_{X_j}(x'^t_j) = \mathbbm{1}_{X_j}(x_j) = 0$. As a result,
\begin{align*}
&(x^t_j - x_j)^2 - (x'^t_j - x_j)^2 - (x^t_j - x'^t_j)^2 \\
\geq& 2\eta_t a_{i_tj}y^t_{i_t}\cdot(x'^t_j-x_j) + 2\eta_t\left[g_j(x'^t_j) - g_j(x_j) + \frac{\mu}{2}(x'^t_j - x_j)^2\right] \\
=& 2\eta_t a_{i_tj}y^t_{i_t}\cdot(x^t_j-x_j) +  2\eta_t a_{i_tj}y^t_{i_t}\cdot(x'^t_j-x^t_j)  + 2\eta_t[g_j(x'^t_j) - g_j(x_j)] + \eta_t\mu(x'^t_j - x_j)^2 \\
\geq& 2\eta_t a_{i_tj}y^t_{i_t}\cdot(x^t_j-x_j) - 2\eta_t^2 \big(a_{i_tj}y^t_{i_t}\big)^2 - \frac{1}{2}(x^t_j-x'^t_j)^2  + 2\eta_t[g_j(x'^t_j) - g_j(x_j)] + \eta_t\mu (x'^t_j - x_j)^2,
\end{align*}
where the last line is due to Cauchy-Schwarz inequality. By cancelling term $(1/2)(x^t_j-x'^t_j)^2$ on both sides, and note that
\[
\frac{1}{2}(x^t_j-x'^t_j)^2 + \eta_t\mu (x'^t_j - x_j)^2 \geq \min\left\{\frac{\eta_t\mu}{2}, \frac{1}{4}\right\}(x^t_j - x_j)^2,
\]
we can further rewrite the above inequality as
\begin{align*}
&\left(1 - \min\left\{\frac{\eta_t\mu}{2}, \frac{1}{4}\right\}\right)(x^t_j - x_j)^2 - (x'^t_j - x_j)^2 \\
\geq& 2\eta_t a_{i_tj}y^t_{i_t}\cdot(x^t_j-x_j) - 2\eta_t^2 \big(a_{i_tj}y^t_{i_t}\big)^2 + 2\eta_t[g_j(x'^t_j) - g_j(x_j)].
\end{align*}
We can bound the second term on the right-hand-side by $(a_{i_t j}y_{i_t})^2\leq L^2a_{i_t j}^2$ because of Lemma \ref{lemma:dual_bound}, and we divide both sides by $2\eta_t$, then we obtain
\begin{equation}
\label{eq:alg1_first_x}
\frac{1}{2\eta_t}\left[\left(1 - \min\left\{\frac{\eta_t\mu}{2}, \frac{1}{4}\right\}\right)(x^t_j - x_j)^2 - (x'^t_j - x_j)^2\right]
\geq a_{i_tj}y_{i_t}^t\cdot(x^t_j-x_j) + g_j(x'^t_j) - g_j(x_j) - \eta_t L^2a_{i_t j}^2.
\end{equation}
Even though $x'^t_j$ depends on $i_t$, it is independent of $j_t$. We observe that $x^{t+1}_j$ takes value $x'^t_j$ with probability $1/d$, and $x^t_j$ otherwise. Hence, by conditioning on $\F'_t\triangleq \F_t\cup\{i_t\}$, we always have
\begin{align*}
\E{g_j(x^{t+1}_j)|\F'_t} =& \frac{1}{d} g_j(x'^t_j) + \frac{d-1}{d}g_j(x^t_j), \\
\E{\|x^{t+1}_j - x_j\|^2|\F'_t} =& \frac{1}{d} \|x'^t_j - x_j\|^2 + \frac{d-1}{d}\|x^t_j - x_j\|^2.
\end{align*}
Put these relationships into \eqref{eq:alg1_first_x}, when conditioned on $\F'_t$, we obtain:
\begin{align*}
&\frac{d}{2\eta_t}\cdot\left[\left(1 - \min\left\{\frac{\eta_t\mu}{2}, \frac{1}{4}\right\}\right)(x^t_j - x_j)^2 - \E{(x^{t+1}_j - x_j)^2| \F'_t} \right] \\
\geq& a_{i_tj}y^t_{i_t}\cdot(x^t_j-x_j) + d\cdot \E{g_j(x^{t+1}_j)|\F'_t} - (d-1) g_j(x^t_j) - g_j(x_j) - \eta_t L^2a_{i_t j}^2.
\end{align*}
We sum this inequality from $j=1$ to $j=d$, and finally obtain:
\begin{align*}
&\frac{d}{2\eta_t}\cdot\left[\left(1 - \min\left\{\frac{\eta_t\mu}{2}, \frac{1}{4}\right\}\right)\|x^t - x\|^2 - \E{\|x^{t+1} - x\|^2| \F'_t} \right] \\
\geq& \sum_{j=1}^d a_{i_t j}y^t_{i_t}\cdot(x^t_j - x_j) + d\cdot \E{g(x^{t+1})|\F'_t} - (d-1)g(x^t) - g(x) - \eta_tL^2 \sum_{j=1}^d a_{i_tj}^2 \\
\geq& \sum_{j=1}^d a_{i_t j}y^t_{i_t}\cdot(x^t_j - x_j) + d\cdot \E{g(x^{t+1})|\F'_t} - (d-1)g(x^t) - g(x) - \eta_tL^2R^2,
\end{align*}
where the last inequality is due to the definition $R=\max_i \|a_i\|$.
Now, we further take expectation with respect to $i_t$:
\begin{align*}
&\frac{d}{2\eta_t}\cdot\left[\left(1 - \min\left\{\frac{\eta_t\mu}{2}, \frac{1}{4}\right\}\right)\|x^t - x\|^2 - \E{\|x^{t+1} - x\|^2| \F_t} \right] \\
\geq& \frac{1}{n}y^{t\top}A(x^t-x) + d\cdot \E{g(x^{t+1})|\F_t} - (d-1)g(x^t) - g(x) - \eta_t L^2R^2.
\numberthis \label{eq:alg1_key_x}
\end{align*}

On the other hand, due to the symmetric nature of Algorithm \ref{alg:1}, and noticing $(1/\gamma)$-smoothness of $\phi_i$ implies $\gamma$-strong convexity of $\phi_i^*$, we can derive similar inequality for $y^{t+1}$:
\begin{align*}
&\frac{n}{2\tau_t}\cdot\left[\left(1 - \min\left\{\frac{\tau_t\gamma}{2d},\frac{1}{4}\right\}\right)\|y^t - y\|^2 - \E{\|y^{t+1} - y\|^2| \F_t} \right] \\
\geq& \frac{1}{d} (y - y^t )Ax^t + \frac{n}{d}\cdot \E{\phi^*(y^{t+1})|\F_t} - \frac{n-1}{d}\phi^*(y^t) - \frac{1}{d}\phi^*(y) - \frac{\tau_t D^2R'^2}{d},
\end{align*}
or equivalently,
\begin{align*}
&\frac{d}{2\tau_t}\cdot\left[\left(1 - \min\left\{\frac{\tau_t\gamma}{2d},\frac{1}{4}\right\}\right)\|y^t - y\|^2 - \E{\|y^{t+1} - y\|^2| \F_t} \right] \\
\geq& \frac{1}{n} (y - y^t )Ax^t + \E{\phi^*(y^{t+1})|\F_t} - \frac{n-1}{n}\phi^*(y^t) - \frac{1}{n}\phi^*(y) - \frac{\tau_t D^2R'^2}{n},
\end{align*}
Let us add this inequality with \eqref{eq:alg1_key_x}:
\begin{align*}
&\frac{d}{2\eta_t}\cdot\left[\left(1 - \frac{\eta_t\mu}{2}\right)\|x^t - x\|^2 - \E{\|x^{t+1} - x\|^2| \F_t} \right] + \\ &\frac{d}{2\tau_t}\cdot\left[\left(1 - \min\left\{\frac{\tau_t\gamma}{2d},\frac{1}{4}\right\}\right)\|y^t - y\|^2 - \E{\|y^{t+1} - y\|^2| \F_t} \right] \\
=& \frac{1}{n} y^{t\top}A(x^t-x) + \frac{1}{n} (y - y^t )Ax^t + d\cdot \E{g(x^{t+1})|\F_t} - (d-1)g(x^t) - g(x)  \\
    & + \E{\phi^*(y^{t+1})|\F_t} - \frac{n-1}{n}\phi^*(y^t) - \frac{1}{n}\phi^*(y)
      - \eta_t L^2R^2 - \frac{\eta_t D^2R'^2}{n} \\
=& \left[\frac{1}{n}y^\top Ax^t + g(x^t) - \frac{1}{n}\phi^*(y) \right] - \left[\frac{1}{n}y^{t\top}Ax + g(x) - \frac{1}{n}\phi^*(y^t) \right]
    - \eta_t L^2R^2 - \frac{\eta_t D^2R'^2}{n} \\
    &+ d\cdot\left[\E{g(x^{t+1})|\F_t} - g(x^t) \right] + \E{\phi^*(y^{t+1})|\F_t} -  \phi^*(y^t)  \\
=& F(x^t,y) - F(x,y^t) - \eta_t L^2R^2 - \frac{\eta_t D^2R'^2}{n} \\
    &+ d\cdot\left[\E{g(x^{t+1})|\F_t} - g(x^t) \right] + \E{\phi^*(y^{t+1})|\F_t} -  \phi^*(y^t),
\end{align*}
which is our desired result.
\end{proof}

\begin{lemma}
\label{lemma:alg1_2}
Under same assumption as Lemma \ref{lemma:alg1}, it holds that
\begin{align*}
&\frac{d}{2}\cdot \sum_{t=0}^{T-1}\frac{\E{\left(1 - \min\left\{\frac{\eta_t\mu}{2},\frac{1}{4}\right\}\right)\|x^t - x\|^2 - \|x^{t+1} - x\|^2}}{\eta_t}+ \\
& \frac{d}{2}\cdot\sum_{t=0}^{T-1}\frac{\E{\left(1 - \min\left\{\frac{\tau_t\gamma}{2d},\frac{1}{4}\right\}\right)\|y^t - y\|^2 - \|y^{t+1} - y\|^2}}{\tau_t} \\
\geq& T\cdot\E{F(\hat{x}^T,y) -F(x,\hat{y}^T)}  - L^2R^2\sum_{t=0}^{T-1}\eta_t - \frac{D^2R'^2}{n}\sum_{t=0}^{T-1}\tau_t.
\numberthis\label{eq:alg1_lemma_2}
\end{align*}
for Algorithm \ref{alg:1}.
\end{lemma}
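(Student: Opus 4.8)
The plan is to obtain \eqref{eq:alg1_lemma_2} by taking total expectations in the per-iteration estimate \eqref{eq:alg1_lemma} of Lemma \ref{lemma:alg1}, summing over $t=0,\dots,T-1$, telescoping the regularizer terms, and finally passing to the averaged iterates via Jensen's inequality.

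First I would apply the tower property $\E{\E{\cdot\,|\,\F_t}}=\E{\cdot}$ to both sides of \eqref{eq:alg1_lemma}, turning every conditional expectation into an unconditional one. For each fixed $t$ this yields an inequality whose left-hand side is precisely the $t$-th summand on the left of \eqref{eq:alg1_lemma_2} (including the factor $d/2$), and whose right-hand side is $\E{F(x^t,y)-F(x,y^t)}-\eta_t L^2R^2-\frac{\tau_t D^2R'^2}{n}$ together with the two ``regularizer difference'' terms $d\big(\E{g(x^{t+1})}-\E{g(x^t)}\big)$ and $\E{\phi^*(y^{t+1})}-\E{\phi^*(y^t)}$.

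Next I would sum these $T$ inequalities. The two regularizer-difference terms telescope and contribute $d\big(\E{g(x^T)}-g(x^0)\big)+\big(\E{\phi^*(y^T)}-\phi^*(y^0)\big)$ to the right-hand side. This is the one step that needs a small observation: by the initialization in Algorithm \ref{alg:1}, $x^0=\argmin_{x\in X}g(x)$, so $g(x^0)\le g(x^T)$; and $y^0_i=\argmin_{y_i\in Y_i}\phi_i^*(y_i)$ for every $i$, so, since $\phi^*=\sum_i\phi_i^*$ is separable, $\phi^*(y^0)\le\phi^*(y^T)$. Hence both telescoped contributions are nonnegative and may simply be discarded, weakening the right-hand side to exactly $\sum_{t=0}^{T-1}\E{F(x^t,y)-F(x,y^t)}-L^2R^2\sum_{t=0}^{T-1}\eta_t-\frac{D^2R'^2}{n}\sum_{t=0}^{T-1}\tau_t$.

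Finally I would invoke convexity–concavity of $F$: since $F(\cdot,y)$ is convex, $\frac1T\sum_{t=0}^{T-1}F(x^t,y)\ge F(\hat{x}^T,y)$, and since $F(x,\cdot)$ is concave, $\frac1T\sum_{t=0}^{T-1}F(x,y^t)\le F(x,\hat{y}^T)$; subtracting and taking expectations gives $\sum_{t=0}^{T-1}\E{F(x^t,y)-F(x,y^t)}\ge T\,\E{F(\hat{x}^T,y)-F(x,\hat{y}^T)}$, which is exactly the bound claimed in \eqref{eq:alg1_lemma_2}. None of these steps is genuinely hard; the only point demanding care is recognizing that the telescoped regularizer terms are sign-definite precisely because of the particular choice of $x^0$ and $y^0$, which is what lets them be dropped rather than tracked through the sum.
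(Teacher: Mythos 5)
Your proposal is correct and follows essentially the same route as the paper's own proof: take total expectations in \eqref{eq:alg1_lemma}, sum over $t$, drop the telescoped regularizer terms using the initialization $x^0=\argmin_{x\in X}g(x)$ and $y^0_i=\argmin_{y_i\in Y_i}\phi_i^*(y_i)$, and apply convexity--concavity of $F$ to pass to the averaged iterates. You even correctly identify the last error term as $\frac{\tau_t D^2R'^2}{n}$ (the statement of Lemma \ref{lemma:alg1} writes $\eta_t$ there, which is a typo), so nothing is missing.
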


\begin{proof}
By summing up inequality \eqref{eq:alg1_lemma} from $t=0$ to $t=T-1$, and applying the law of total expectation, we have
\begin{align*}
&\frac{d}{2}\cdot \sum_{t=0}^{T-1}\frac{\E{\left(1 - \min\left\{\frac{\eta_t\mu}{2},\frac{1}{4}\right\}\right)\|x^t - x\|^2 - \|x^{t+1} - x\|^2}}{\eta_t} \\
&+ \frac{d}{2}\cdot\sum_{t=0}^{T-1}\frac{\E{\left(1 - \min\left\{\frac{\tau_t\gamma}{2d},\frac{1}{4}\right\}\right)\|y^t - y\|^2 - \|y^{t+1} - y\|^2}}{\tau_t} \\
\geq& \sum_{t=0}^{T-1} \E{F(x^t,y) - F(x,y^t)} - L^2R^2\sum_{t=0}^{T-1}\eta_t - \frac{D^2R'^2}{n}\sum_{t=0}^{T-1}\tau_t \\
+&d\sum_{t=0}^{T-1}\E{g(x^{t+1}) - g(x^t)} + \sum_{t=0}^{T-1}\E{\phi^*(y^{t+1}) - \phi^*(y^t)} \\
=& \sum_{t=0}^{T-1} \E{F(x^t,y) - F(x,y^t)} - L^2R^2\sum_{t=0}^{T-1}\eta_t - \frac{D^2R'^2}{n}\sum_{t=0}^{T-1}\tau_t \\
+&d\cdot \E{g(x^T) - g(x^0)} + \E{\phi^*(y^T) - \phi^*(y^0)}.
\end{align*}
Recall that $x^0$ and $y^0$ satisfy $x^0=\argmin_{x\in X}g(x)$ and $y^0=\argmin_{y\in Y}\phi^*(y)$, which means $g(x^T)-g(x^0)\geq 0$ and $\phi^*(y^T)-\phi^*(y^0)\geq 0$. Hence,
\begin{align*}
&\frac{d}{2}\cdot \sum_{t=0}^{T-1}\frac{\E{\left(1 - \min\left\{\frac{\eta_t\mu}{2},\frac{1}{4}\right\}\right)\|x^t - x\|^2 - \|x^{t+1} - x\|^2}}{\eta_t} \\
&+ \frac{d}{2}\cdot\sum_{t=0}^{T-1}\frac{\E{\left(1 - \min\left\{\frac{\tau_t\gamma}{2d},\frac{1}{4}\right\}\right)\|y^t - y\|^2 - \|y^{t+1} - y\|^2}}{\tau_t} \\
\geq& \sum_{t=0}^{T-1} \E{F(x^t,y) - F(x,y^t)} - dL^2R^2\sum_{t=0}^{T-1}\eta_t - \frac{D^2R'^2}{n}\sum_{t=0}^{T-1}\tau_t.
\numberthis\label{eq:before_theorem}
\end{align*}
Besides, we use the definition of $\bar{x}^T$ and $\bar{y}^T$, along with the convexity-concavity of $F(x,y)$, we can obtain
\[
T\cdot \E{F(\hat{x}^T,y) -F(x,\hat{y}^T)} \leq \sum_{t=0}^{T-1} \E{F(x^t,y) -F(x,y^t)}.
\]
Now we can complete the proof by combining this fact with \eqref{eq:before_theorem}.
\end{proof}

Now, we are ready to prove Theorem \ref{thm:alg1_non_strongly}:
\begin{proof}[Proof of Theorem \ref{thm:alg1_non_strongly}]
Lemma \ref{lemma:alg1_2} can be applied here with $\mu=0$ and $\gamma=0$.
Let us first bound the term on the left-hand-side of \eqref{eq:alg1_lemma_2}:
\begin{align*}
&\sum_{t=0}^{T-1}\frac{\E{\|x^t - x\|^2 - \|x^{t+1} - x\|^2}}{\eta_t} \\
=& \frac{1}{\eta_0}\cdot \|x^0 -x\|^2 + \sum_{t=1}^{T-1}\left(\frac{1}{\eta_t} - \frac{1}{\eta_{t-1}} \right)\E{\|x^t - x\|^2}
     - \frac{1}{\eta_{T-1}}\cdot \E{\|x^T - x\|^2} \\
\leq& \frac{1}{\eta_0}\cdot 4D^2 + \sum_{t=1}^{T-1}\left(\frac{1}{\eta_t} - \frac{1}{\eta_{t-1}} \right)4D^2 \\
=& \frac{4D^2}{\eta_{T-1}}
\end{align*}
by the boundness assumption of $X$, along with the fact sequence $\{\eta_t\}$ is non-increasing. By applying Lemma \ref{lemma:dual_bound} again, we know that$\|y^t-y\|^2\leq 2nL^2$, and hence we can similarly show that
\begin{align*}
\sum_{t=0}^{T-1}\frac{\E{\|y^t - y\|^2 - \|y^{t+1} - y\|^2}}{\tau_t} \leq \frac{4nL^2}{\tau_{T-1}}.
\end{align*}
Combine all these facts into \eqref{eq:alg1_lemma_2}, one can obtain
\[
T\cdot \E{F(\hat{x}^T,y) -F(x,\hat{y}^T)}\leq \frac{2dD^2}{\eta_{T-1}} + \frac{2ndL^2}{\tau_{T-1}} + L^2R^2\sum_{t=0}^{T-1}\eta_t + \frac{D^2R'^2}{n}\sum_{t=0}^{T-1}\tau_t.
\]
Divide both sides by $T$, and then take supremum with respect to $x\in X$ and $y\in Y$:
\begin{align*}
&\E{\mathcal{G}(\hat{x}^T,\hat{y}^T)}  \\
=& \sup_{x\in X,y\in Y}\E{F(\hat{x}^T,y) -F(x,\hat{y}^T)} \\
\leq& \frac{2dD^2/\eta_{T-1} + 2ndL^2/\tau_{T-1} + L^2R^2\sum_{t=0}^{T-1}\eta_t + (D^2R'^2/n)\sum_{t=0}^{T-1}\tau_t}{T},
\end{align*}
and the theorem can be proven by plug the values of $\{\eta_t\}$ and $\{\tau_t\}$ into this inequality.
\end{proof}

We are ready to prove Theorem \ref{thm:alg1_strongly}:
\begin{proof}[Proof of Theorem \ref{thm:alg1_strongly}]
Again, we need to apply Lemma \ref{lemma:alg1_2} here. The first term on the left-hand-side of \eqref{eq:alg1_lemma_2} can be bounded in the following way:
\begin{align*}
&\sum_{t=0}^{T-1}\frac{\E{\left(1 - \min\left\{\frac{\eta_t\mu}{2},\frac{1}{4}\right\}\right)\|x^t - x\|^2 - \|x^{t+1} - x\|^2}}{\eta_t} \\
=& \frac{1-\min\left\{\frac{\eta_0\mu}{2},\frac{1}{4}\right\}}{\eta_0}\cdot \|x^0 -x\|^2 + \sum_{t=1}^{T-1}\left(\frac{1-\min\left\{\frac{\eta_t\mu}{2},\frac{1}{4}\right\}}{\eta_t} - \frac{1}{\eta_{t-1}} \right)\E{\|x^t - x\|^2}
     - \frac{1}{\eta_{T-1}}\cdot \E{\|x^T - x\|^2} \\
\leq& \frac{1}{\eta_0}\cdot \|x^0 - x\|^2 + \sum_{t=1}^{T-1}\max\left\{\frac{1}{\eta_t} - \frac{1}{\eta_{t-1}} -\frac{\mu}{2} ,\ \frac{3}{4\eta_t} - \frac{1}{\eta_{t-1}}\right\}\E{\|x^t - x\|^2}\\
\leq& \frac{1}{\eta_0}\cdot \|x^0 - x\|^2 + \sum_{t=1}^{T-1}0\cdot\E{\|x^t - x\|^2} \\
\leq& \frac{4}{\eta_0}\cdot D^2,
\end{align*}
where the third equality follows from our choices of $\{\eta_t\}$, and the last inequality is due to boudness assumption again.
Similarly, we can derive the bound
\[
\sum_{t=0}^{T-1}\frac{\E{\left(1 - \min\left\{\frac{\tau_t\gamma}{2d},\frac{1}{4}\right\}\right)\|y^t - y\|^2 - \|y^{t+1} - y\|^2}}{\tau_t} \leq \frac{4nL^2}{\tau_0}.
\]
Combine these two bounds into \eqref{eq:alg1_lemma_2} and then take supremum with respect to $x$ and $y$, we can finally obtain:
\begin{align*}
\E{\mathcal{G}(\hat{x}^T,\hat{y}^T)}
\leq \frac{2dD^2/\eta_0 + 2ndL^2/\tau_0 + L^2R^2\sum_{t=0}^{T-1}\eta_t + (D^2R'^2/n)\sum_{t=0}^{T-1}\tau_t}{T}.
\end{align*}
We can finish the proof by plug the values of $\{\eta_t\}$ and $\{\tau_t\}$.
\end{proof}

\newpage

\subsection{Proofs Concerning Algorithm \ref{alg:2}}
Again, we use
\[
\F_t=\{i_0,i'_0,j_0,j'_0,\dots,i_{t-1},i'_{t-1},j_{t-1},j'_{t-1}\}
\]
to denote all the random variables generated before iteration $t$. We use the following notation to denote the variance-reduced gradients used in our algorithms:
\begin{align*}
\nabla_{x,j}^t& = a_{i'_t j}(y_{i'_t}^t -\tilde{y}_{i'_t}^k) + G_{x,j}^k,  \\
\nabla_{y,i}^t& = a_{i j'_t}(x_{j'_t}^t -\tilde{x}_{j'_t}^k) + G_{y,i}^k,  \\
\nabla_{x,j}'^t& = a_{i_t j}(\bar{y}_{i_t}^t -\tilde{y}_{i_t}^k) + G_{x,j}^k,  \\
\nabla_{y,i}'^t& = a_{i j_t}(\bar{x}_{j_t}^t -\tilde{x}_{j_t}^k) + G_{y,i}^k.
\end{align*}
Besides, we will also define two ``imaginary'' iterates $x'^t$ and $y'^t$:
\begin{align*}
x'^t_j =& \prox_{\eta g_j}\left(x^t_j - \eta \nabla^t_{x,j} \right)\quad\forall j\in[d], \\
y'^t_i =& \prox_{(\tau/d)\phi_i^*}\left(y^t_i - \tau \nabla^t_{y,i} \right)\quad\forall i\in[n].
\end{align*}
Obviously, $\bar{x}^t_j=x'^t_j$ when $j=j_t$ and $\bar{y}^t_i=y'^t_i$ when $i=i_t$. A key observation here is that each $x'^t_j$ only depends on $i'_t$ when conditioned on $\F_t$, and it is independent of $i_t$, $j_t$
and $j'_t$. Similarly, $y'^t_i$ is also independent of $i_t$, $j_t$ and $i'_t$.

First, let us develop a bound for the gradient variance:
\begin{lemma}
\label{lemma:grad_variance}
Algorithm \ref{alg:2} has the following bounds for its gradient variance:
\begin{align*}
&\E{(\nabla^t_{x,j_t} - \nabla'^t_{x,j_t})^2|\F_t}\leq
\frac{8R^2}{nd}\|y-\tilde{y}^k\|^2 + \frac{12R^2}{nd}\|y^t-y\|^2 + \frac{8R^2}{d}\E{\|y^t-\bar{y}^t\|^2|\F_t}
 \numberthis\label{eq:variance_x} \\
&\E{(\nabla^t_{y,i_t} - \nabla'^t_{y,i_t})^2|\F_t}\leq
\frac{8R'^2}{nd}\|x-\tilde{x}^k\|^2 + \frac{12R'^2}{nd}\|x^t-x\|^2 + \frac{8R'^2}{n}\E{\|x^t-\bar{x}^t\|^2|\F_t}
 \numberthis\label{eq:variance_y}
\end{align*}
for any $x\in X$ and $y\in Y$.
\begin{proof}
Just by definition of $\nabla^t_{x,j_t}$ and $\nabla'^t_{x,j_t}$, and repeatedly using $(a+b)^2\leq 2a^2+2b^2$, we have
\begin{align*}
(\nabla^t_{x,j_t} - \nabla'^t_{x,j_t})^2 =& \left[a_{i'_t j_t}(y_{i'_t}^t -\tilde{y}_{i'_t}^k) - a_{i_t j_t}(\bar{y}_{i_t}^t -\tilde{y}_{i_t}^k)\right]^2 \\
\leq& 2a_{i'_t j_t}^2(y_{i'_t}^t -\tilde{y}_{i'_t}^k)^2  + 2a_{i_t j_t}^2(\bar{y}_{i_t}^t -\tilde{y}_{i_t}^k)^2 \\
\leq& 2a_{i'_t j_t}^2\left[2(y_{i'_t}^t - y_{i'_t})^2 + 2(y_{i'_t} -\tilde{y}_{i'_t}^k)^2 \right] \\
    &+2a_{i_t j_t}^2\left[2(\tilde{y}_{i_t}^k - y_{i_t})^2 + 2(y_{i_t} - \bar{y}_{i_t}^t)^2\right] \\
\leq& 2a_{i'_t j_t}^2\left[2(y_{i'_t} - y_{i'_t}^t)^2 + 2(y_{i'_t} -\tilde{y}_{i'_t}^k)^2 \right] \\
    &+2a_{i_t j_t}^2\left[2(\tilde{y}_{i_t}^k-y_{i_t})^2 + 4(y_{i_t} - y_{i_t}^t)^2 + 4(y_{i_t}^t - \bar{y}^t_{i_t})^2\right] \\
=& 2a_{i'_t j_t}^2\left[2(y_{i'_t} - y_{i'_t}^t)^2 + 2(y_{i'_t} -\tilde{y}_{i'_t}^k)^2 \right] \\
    &+2a_{i_t j_t}^2\left[2(\tilde{y}_{i_t}^k-y_{i_t})^2 + 4(y_{i_t} - y_{i_t}^t)^2 + 4\|y^t - \bar{y}^t\|^2\right],
\numberthis\label{eq:grad_variance_before_exp}
\end{align*}
where the last line is due to the fact that $\bar{x}^t$ and $x^t$ only differ in coordinate $i_t$.
In the next, we will take expectation. Since $i'_t$ and $j_t$ are independent, then
\begin{align*}
\E{a_{i'_t j_t}^2(y_{i'_t} - y_{i'_t}^t)^2|\F_t} =& \E{\E[j_t]{a_{i'_t j_t}^2}\cdot(y_{i'_t} - y_{i'_t}^t)^2|\F_t} \\
=& \E{\frac{\|a_{i_t}\|^2}{d}\cdot(y_{i'_t} - y_{i'_t}^t)^2|\F_t} \\
\leq& \frac{R^2}{d}\cdot\E{(y_{i'_t} - y_{i'_t}^t)^2|\F_t} \\
=& \frac{R^2}{nd}\|y - y^t\|^2.
\end{align*}
And we can bound other terms in \eqref{eq:grad_variance_before_exp} similarly, then we obtain \eqref{eq:variance_x}.
The proof for variance bound \eqref{eq:variance_y} is analogous.
\end{proof}
\end{lemma}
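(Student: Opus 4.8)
The plan is to prove the bound \eqref{eq:variance_x}; the bound \eqref{eq:variance_y} then follows verbatim by the symmetry between the primal and dual updates (with $R$, $n$, $d$, and $y$ replaced by $R'$, $d$, $n$, and $x$). The first thing I would observe is that $\nabla^t_{x,j_t}$ and $\nabla'^t_{x,j_t}$ share the identical snapshot term $G^k_{x,j_t}$, so that in the difference this term cancels exactly and
\[
\nabla^t_{x,j_t} - \nabla'^t_{x,j_t} = a_{i'_t j_t}(y_{i'_t}^t - \tilde{y}^k_{i'_t}) - a_{i_t j_t}(\bar{y}^t_{i_t} - \tilde{y}^k_{i_t}).
\]
Squaring and applying $(a-b)^2\le 2a^2+2b^2$ once splits the quantity into an $i'_t$-term $2a_{i'_t j_t}^2(y^t_{i'_t}-\tilde{y}^k_{i'_t})^2$ and an $i_t$-term $2a_{i_t j_t}^2(\bar{y}^t_{i_t}-\tilde{y}^k_{i_t})^2$, which I would bound separately before taking expectations.

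For the $i'_t$-term I would insert the reference coordinate $y_{i'_t}$ and use $(a+b)^2\le 2a^2+2b^2$ once, producing contributions proportional to $(y^t_{i'_t}-y_{i'_t})^2$ and $(y_{i'_t}-\tilde{y}^k_{i'_t})^2$ each with coefficient $4$. The $i_t$-term needs one extra split: after inserting $y_{i_t}$ I would further decompose $(y_{i_t}-\bar{y}^t_{i_t})^2$ through $y^t_{i_t}$, producing $(\tilde{y}^k_{i_t}-y_{i_t})^2$ with coefficient $4$, $(y_{i_t}-y^t_{i_t})^2$ with coefficient $8$, and $(y^t_{i_t}-\bar{y}^t_{i_t})^2$ with coefficient $8$. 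The single genuinely structural step here is that $\bar{y}^t$ and $y^t$ agree in every coordinate except $i_t$, so $(y^t_{i_t}-\bar{y}^t_{i_t})^2=\|y^t-\bar{y}^t\|^2$; this is what turns a one-coordinate residual into the full-norm residual appearing on the right-hand side.

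It then remains to take the conditional expectation, and the key to recovering the correct $1/(nd)$ and $1/d$ prefactors is the independence of $j_t$ from the remaining indices, which lets me factor out $\E[j_t]{a_{ij_t}^2}=\|a_i\|^2/d\le R^2/d$ for any fixed row index $i$. For the coordinate-residual terms such as $(y^t_{i'_t}-y_{i'_t})^2$ and $(y_{i_t}-y^t_{i_t})^2$, averaging the row index uniformly over $[n]$ afterwards converts $\frac1n\sum_i(\cdot)_i^2$ into $\frac1n\|\cdot\|^2$, yielding the $R^2/(nd)$ prefactors; for the residual $\|y^t-\bar{y}^t\|^2$, which depends on $i_t$ and $j'_t$ but not on $j_t$, only the $j_t$-expectation of $a_{i_t j_t}^2$ is taken and the $R^2/d$ prefactor results. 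Collecting coefficients gives $4+4=8$ on $\|y-\tilde{y}^k\|^2$, $4+8=12$ on $\|y^t-y\|^2$, and $8$ on $\E{\|y^t-\bar{y}^t\|^2|\F_t}$, exactly as claimed.

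The main thing to watch throughout is the dependence bookkeeping rather than any hard inequality: I must verify that $\|y^t-\bar{y}^t\|^2$ is genuinely independent of $j_t$ so that the $a_{i_t j_t}^2$ factor can be averaged out cleanly against it, and that the reference vectors $x,y$ may be treated as fixed since they range over the feasible sets arbitrarily. I expect this tracking of which index each factor depends on — in particular distinguishing the roles of $i_t$, $i'_t$, $j_t$, $j'_t$ in the extragradient double update — to be the only delicate part; every other step is a routine application of the elementary inequalities $(a\pm b)^2\le 2a^2+2b^2$.
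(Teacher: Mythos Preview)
Your proposal is correct and follows essentially the same approach as the paper's proof: cancel the snapshot term, split via $(a+b)^2\le 2a^2+2b^2$, insert the reference point $y$ and further decompose the $i_t$-term through $y^t_{i_t}$, use that $\bar{y}^t$ and $y^t$ differ only in coordinate $i_t$, then factor the $j_t$-average of $a_{\cdot\, j_t}^2$ by independence. Your dependence bookkeeping (in particular noting that $\bar{y}^t$ depends on $i_t,j'_t$ but not $j_t$) is exactly the point the paper relies on implicitly, and your coefficient tally $4+4=8$, $4+8=12$, $8$ matches the statement.
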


The following lemma is the key to prove the convergence of SPD1-VR:
\begin{lemma}
\label{lemma:alg2_one_step}
Assume $g(x)$ is $\mu$-strongly convex, and all $\phi_i$ is $(1/\gamma)$-smooth, then by conditioning on $\F_t$, it holds that
\begin{align*}
&\frac{1}{2\eta}\E{\left(1 - \frac{\eta\mu}{4d}+\frac{12R'^2\eta\tau}{nd}\right)\|x^t - x\|^2 -  \|x^{t+1} - x\|^2 - \left(\frac{1}{2} - \frac{8\eta\tau R'^2}{n}\right)\|x^t - \bar{x}^t\|^2\big|\F_t}\\
&+\frac{1}{2\tau}\E{\left(1 - \frac{\tau\gamma}{4nd} + \frac{12R^2\eta\tau}{nd}\right)\|y^t - y\|^2 - \|y^{t+1} - y\|^2 - \left(\frac{1}{2} - \frac{8\eta\tau R^2}{d}\right)\|y^t - \bar{y}^t\|^2\big|\F_t} \\
\geq&\frac{1}{d}\cdot\E{F(x'^t,y) - F(x,y'^t)\big|\F_t} - \frac{4\eta R^2}{nd}\|\tilde{y}^k - y\|^2 - \frac{4\tau R'^2}{nd}\|\tilde{x}^k-x\|^2,
\end{align*}
for any $x\in X$ and $y\in Y$.
\end{lemma}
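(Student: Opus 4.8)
The plan is to run the standard one-step analysis for a primal-dual extragradient scheme, but carried out coordinate-by-coordinate and then averaged over the random indices $i_t,i_t',j_t,j_t'$. First I freeze the iteration and observe that only the primal coordinate $j_t$ and the dual coordinate $i_t$ move. On coordinate $j_t$ the inner loop performs two proximal steps, so I write the optimality condition of each: a subgradient identity for $\bar{x}^t_{j_t}=x'^t_{j_t}$ (first stage, gradient $\nabla_{x,j_t}^t$) and one for $x^{t+1}_{j_t}$ (second stage, gradient $\nabla_{x,j_t}'^t$). Combining these via the elementary identity $2(a-b)(b-c)=(a-c)^2-(a-b)^2-(b-c)^2$, bounding the residual $(\nabla_{x,j_t}^t-\nabla_{x,j_t}'^t)(\bar{x}^t_{j_t}-x^{t+1}_{j_t})$ by Young's inequality (this cancels $(\bar{x}^t_{j_t}-x^{t+1}_{j_t})^2$ and creates the gradient-mismatch term), and using $\mu$-strong convexity of $g_{j_t}$, I obtain a scalar inequality of the shape
\begin{align*}
&\tfrac{1}{2\eta}\left[(x^t_{j_t}-x_{j_t})^2-(x^{t+1}_{j_t}-x_{j_t})^2-(x^t_{j_t}-\bar{x}^t_{j_t})^2\right]+\tfrac{\eta}{2}\big(\nabla_{x,j_t}^t-\nabla_{x,j_t}'^t\big)^2-\tfrac{\mu}{2}(x^{t+1}_{j_t}-x_{j_t})^2\\
&\qquad\ge\ \nabla_{x,j_t}'^t\,(x'^t_{j_t}-x_{j_t})+g_{j_t}(x'^t_{j_t})-g_{j_t}(x_{j_t}).
\end{align*}
The dual coordinate $i_t$ is treated by the mirror-image argument, using that $(1/\gamma)$-smoothness of $\phi_i$ makes $\phi_i^*$ $\gamma$-strongly convex, and being careful that the dual prox carries step size $\tau/d$ on $\phi_i^*$ but $\tau$ on the bilinear coupling, so that the prefactor comes out as $1/(2\tau)$ and the strong-convexity gain as $-\tfrac{\gamma}{2d}(y^{t+1}_{i_t}-y_{i_t})^2$.

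Next I take the conditional expectation over $i_t,j_t,i_t',j_t'$. The crucial point is the dependence bookkeeping: $x'^t$ depends only on $i_t'$, $y'^t$ only on $j_t'$, the variance-reduced gradient $\nabla_{x,j}'^t=a_{i_tj}(y'^t_{i_t}-\tilde{y}^k_{i_t})+G^k_{x,j}$ is conditionally unbiased for $\tfrac1n(A^\top y'^t)_j$ when we average over $i_t$ alone, and likewise $\nabla_{y,i}'^t$ for $\tfrac1d(Ax'^t)_i$; moreover, since only one coordinate of $x$ changes, the $1/d$ factors cancel and $\mathbb{E}_{j_t}[(x^t_{j_t}-x_{j_t})^2-(x^{t+1}_{j_t}-x_{j_t})^2]=\|x^t-x\|^2-\mathbb{E}\|x^{t+1}-x\|^2$, and similarly on the dual. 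Averaging the bilinear inner products then gives $\tfrac1{nd}(y'^t)^\top A(x'^t-x)$ from the primal and $\tfrac1{nd}(y-y'^t)^\top Ax'^t$ from the dual; adding the averaged regularizer pieces $\tfrac1d(g(x'^t)-g(x))$ and $\tfrac1{nd}(\phi^*(y'^t)-\phi^*(y))$, every bilinear and regularizer term telescopes into exactly $\tfrac1d\big(F(x'^t,y)-F(x,y'^t)\big)$, which is the desired right-hand side.

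Finally I clean up the leftover quadratics. The gradient-mismatch terms $\tfrac\eta2\big(\nabla_{x,j_t}^t-\nabla_{x,j_t}'^t\big)^2$ and $\tfrac\tau2\big(\nabla_{y,i_t}^t-\nabla_{y,i_t}'^t\big)^2$ are bounded by Lemma~\ref{lemma:grad_variance}; the pieces proportional to $\|\tilde{y}^k-y\|^2$ and $\|\tilde{x}^k-x\|^2$ are moved to the right (producing $-\tfrac{4\eta R^2}{nd}\|\tilde{y}^k-y\|^2-\tfrac{4\tau R'^2}{nd}\|\tilde{x}^k-x\|^2$), while the pieces proportional to $\|y^t-y\|^2$, $\|x^t-x\|^2$, $\|y^t-\bar{y}^t\|^2$, $\|x^t-\bar{x}^t\|^2$ are absorbed into the left-hand coefficients --- this is where the $+\tfrac{12R^2\eta\tau}{nd}$, $+\tfrac{12R'^2\eta\tau}{nd}$ and the $\tfrac{8\eta\tau R^2}{d}$, $\tfrac{8\eta\tau R'^2}{n}$ corrections come from. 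The strong-convexity term $-\tfrac\mu2(x^{t+1}_{j_t}-x_{j_t})^2$ is exploited by keeping half of it and splitting $(x^{t+1}_{j_t}-x_{j_t})^2\ge\tfrac12(x^t_{j_t}-x_{j_t})^2-(x^t_{j_t}-x^{t+1}_{j_t})^2$; the leftover $(x^t_{j_t}-x^{t+1}_{j_t})^2$ is controlled via non-expansiveness of the prox, $(\bar{x}^t_{j_t}-x^{t+1}_{j_t})^2\le\eta^2\big(\nabla_{x,j_t}^t-\nabla_{x,j_t}'^t\big)^2$, plus the remaining slack in the $-(x^t_{j_t}-\bar{x}^t_{j_t})^2$ term, which after averaging yields the contraction factor $1-\tfrac{\eta\mu}{4d}$ and leaves only $\tfrac12\|x^t-\bar{x}^t\|^2$ subtracted on the left. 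I expect this last bookkeeping --- making every numerical constant from the strong-convexity step, the Young split and the variance bound line up, together with checking the implicit smallness conditions on $\eta$ and $\tau$ that these absorptions require --- to be the main obstacle; the conceptual content (the extragradient three-point inequality, coordinate averaging, and the telescoping of the bilinear terms) is routine once the randomness dependence structure is pinned down.
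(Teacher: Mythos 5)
Your proposal follows essentially the same route as the paper's proof: the three-point identity combined with the optimality conditions of the two proximal steps, Young's inequality to cancel $\|\bar{x}^t-x^{t+1}\|^2$ against the gradient-mismatch term, strong convexity of $g_{j_t}$ and $\phi_{i_t}^*$, conditional expectation exploiting the independence of $x'^t$ from $(i_t,j_t,j_t')$ to kill the cross terms and telescope the bilinear pieces into $\tfrac{1}{d}(F(x'^t,y)-F(x,y'^t))$, and finally Lemma~\ref{lemma:grad_variance} to absorb the variance into the stated coefficients. The only deviations are minor bookkeeping choices (e.g.\ which intermediate point carries the strong-convexity quadratic before it is converted to $(x^t_{j_t}-x_{j_t})^2$), which lead to the same $1-\tfrac{\eta\mu}{4d}$ contraction.
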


\begin{proof}
First, by $2a\cdot b=(a+b)^2 - a^2 - b^2$, we have the following two inequalities:
\[
2 (x^t-\bar{x}^t)^\top(\bar{x}^t - x^{t+1}) = \|x^t - x^{t+1}\|^2 - \|x^{t+1} - \bar{x}^t\|^2 - \|x^t - \bar{x}^t\|^2
\]
and
\[
2(x^t - x^{t+1})^\top(x^{t+1}-x) = \|x^t - x\|^2 - \|x^{t+1} - x\|^2 - \|x^t - x^{t+1}\|^2
\]
for any $x\in X$. By adding these two inequality together, we have
\begin{align*}
&\|x^t - x\|^2 - \|x^{t+1} - x\|^2 - \|x^{t+1} - \bar{x}^t\|^2 - \|x^t - \bar{x}^t\|^2 \\
=& 2(x^t-\bar{x}^t)^\top(\bar{x}^t - x^{t+1}) + 2(x^t - x^{t+1})^\top(x^{t+1}-x) \\
=& 2(x^t_{j_t}-\bar{x}^t_{j_t})\cdot (\bar{x}^t_{j_t} - x^{t+1}_{j_t})
    + 2(x^t_{j_t} - x^{t+1}_{j_t})\cdot(x^{t+1}_{j_t}-x_{j_t}),
\numberthis\label{eq:first_x}
\end{align*}
where the last line is because $x^t$ and $\bar{x}^t$ only differs in coordinate $j_t$, and similarly for $x^t$ and $x^{t+1}$.
According to the updating rules
\begin{align*}
\bar{x}^t_{j_t}&=\prox_{\eta g_{j_t}}\left(x^t_{j_t} - \eta\nabla^t_{x,j_t} \right), \\
x^{t+1}_{j_t}&=\prox_{\eta g_{j_t}}\left(x^t_{j_t} - \eta\nabla'^t_{x,j_t} \right),
\end{align*}
and the optimality condition of the proximal mapping subproblem, there must exist $s\in\partial g_{j_t}(\bar{x}^t_{j_t})+\partial\mathbbm{1}_{X_{j_t}}(\bar{x}^t_{j_t})$ and $s'\in\partial g_{j_t}(x^{t+1}_{j_t})+\partial\mathbbm{1}_{X_{j_t}}(x^{t+1}_{j_t})$ such that
\begin{align*}
\bar{x}^t_{j_t} &= x^t_{j_t} - \eta\nabla^t_{x,j_t} - \eta s, \\
x^{t+1}_{j_t} &= x^t_{j_t} - \eta\nabla'^t_{x,j_t} - \eta s',
\end{align*}
where $\mathbbm{1}_Z(z)$ is the indicator function of convex set $Z$, which takes value $0$ when $z\in Z$, while $\mathbbm{1}_Z(z)=+\infty$ when $z\notin Z$.
Combine these facts into \eqref{eq:first_x}, one can obtain:
\begin{align*}
&\|x^t - x\|^2 - \|x^{t+1} - x\|^2 - \|x^{t+1} - \bar{x}^t\|^2 - \|x^t - \bar{x}^t\|^2 \\
=& 2\eta s\cdot(\bar{x}^t_{j_t}-x^{t+1}_{j_t}) + 2\eta s'\cdot(x^{t+1}_{j_t}-x_{j_t})
    + 2 \eta\nabla^t_{x,j_t} \cdot(\bar{x}^t_{j_t} - x^{t+1}_{j_t}) + 2 \eta\nabla'^t_{x,j_t} \cdot(x^{t+1}_{j_t}-x_{j_t}).
\end{align*}
Because of the separable assumption \eqref{eq:separable}, $\mu$-strongly convex of $g(x)$ implies $\mu$-strongly convex of every component function $g_j(x_j)$. Now we apply the convexity of indicator function $\mathbbm{1}_{X_{j_t}}(x_{j_t})$ and the strong convexity of $g_{j_t}(x_{j_t})$, and further observe that all of $\bar{x}^t, x^{t+1}$ and $x$ are always feasible, which mean $\mathbbm{1}_{X_{j_t}}(\bar{x}^t_{j_t})=\mathbbm{1}_{X_{j_t}}(x^{t+1}_{j_t})=\mathbbm{1}_{X_{j_t}}(x_{j_t})=0$, and have
\begin{align*}
&\|x^t - x\|^2 - \|x^{t+1} - x\|^2 - \|x^{t+1} - \bar{x}^t\|^2 - \|x^t - \bar{x}^t\|^2 \\
\geq& 2\eta \left[g_{j_t}(\bar{x}^t_{j_t}) - g_{j_t}(x^{t+1}_{j_t}) + \frac{\mu}{2}\big(\bar{x}^t_{j_t} - x^{t+1}_{j_t}\big)^2 \right]
    + 2\eta \left[g_{j_t}(x^{t+1}_{j_t}) - g_{j_t}(x_{j_t}) + \frac{\mu}{2}\big(x^{t+1}_{j_t} - x_{j_t}\big)^2 \right]  \\
    &+ 2 \eta\nabla^t_{x,j_t} \cdot(\bar{x}^t_{j_t} - x^{t+1}_{j_t}) + 2 \eta\nabla'^t_{x,j_t} \cdot(x^{t+1}_{j_t}-x_{j_t}) \\
=& 2\eta \left[g_{j_t}(\bar{x}^t_{j_t}) - g_{j_t}(x_{j_t}) + \frac{\mu}{2}\big(\bar{x}^t_{j_t} - x^{t+1}_{j_t}\big)^2
    + \frac{\mu}{2}\big(x^{t+1}_{j_t} - x_{j_t}\big)^2 \right] \\
    &+ 2 \eta\nabla^t_{x,j_t} \cdot(\bar{x}^t_{j_t} - x^{t+1}_{j_t}) + 2 \eta\nabla'^t_{x,j_t} \cdot(x^{t+1}_{j_t}-x_{j_t}) \\
\geq& 2\eta \left[g_{j_t}(\bar{x}^t_{j_t}) - g_{j_t}(x_{j_t}) + \frac{\mu}{4}\big(\bar{x}^t_{j_t} - x_{j_t}\big)^2 \right]
    + 2 \eta\nabla^t_{x,j_t} \cdot(\bar{x}^t_{j_t} - x^{t+1}_{j_t}) + 2 \eta\nabla'^t_{x,j_t} \cdot(x^{t+1}_{j_t}-x_{j_t}),
\end{align*}
where inequality $a^2+b^2\geq (1/2)(a+b)^2$ is used in the second inequality. When $\eta\mu\leq 1$, it holds that
\[
\frac{1}{2}\|x^t -\bar{x}^t\|^2 + \frac{\eta\mu}{2}\big(\bar{x}^t_{j_t} - x_{j_t}\big)^2 \geq \frac{1}{2}(x^t_{j_t} -\bar{x}^t_{j_t})^2 + \frac{\eta\mu}{2}\big(\bar{x}^t_{j_t} - x_{j_t}\big)^2
\geq \frac{\eta\mu}{4}\big(x^t_{j_t} - x_{j_t}\big)^2.
\]
As a result, we have
\begin{align*}
&\|x^t - x\|^2 - \frac{\eta\mu}{4}\big(x^t_{j_t} - x_{j_t}\big)^2 - \|x^{t+1} - x\|^2 - \|x^{t+1} - \bar{x}^t\|^2 - \frac{1}{2}\|x^t - \bar{x}^t\|^2 \\
\geq& 2\eta \left[g_{j_t}(\bar{x}^t_{j_t}) - g_{j_t}(x_{j_t}) \right]
    + 2 \eta\nabla^t_{x,j_t} \cdot(\bar{x}^t_{j_t} - x^{t+1}_{j_t}) + 2 \eta\nabla'^t_{x,j_t} \cdot(x^{t+1}_{j_t}-x_{j_t}) \\
=& 2\eta \left[g_{j_t}(\bar{x}^t_{j_t}) - g_{j_t}(x_{j_t}) \right]
    + 2 \eta\nabla'^t_{x,j_t} \cdot(\bar{x}^t_{j_t} - x_{j_t}) + 2 \eta\left(\nabla^t_{x,j_t} - \nabla'^t_{x,j_t}\right)\cdot(\bar{x}^t_{j_t} - x^{t+1}_{j_t}) \\
\geq& 2\eta \left[g_{j_t}(\bar{x}^t_{j_t}) - g_{j_t}(x_{j_t}) \right]
    + 2 \eta\nabla'^t_{x,j_t} \cdot(\bar{x}^t_{j_t} - x_{j_t}) - \eta^2\left(\nabla^t_{x,j_t} - \nabla'^t_{x,j_t}\right)^2 - (\bar{x}^t_{j_t} - x^{t+1}_{j_t})^2 \\
\geq& 2\eta \left[g_{j_t}(\bar{x}^t_{j_t}) - g_{j_t}(x_{j_t}) \right]
    + 2 \eta\nabla'^t_{x,j_t} \cdot(\bar{x}^t_{j_t} - x_{j_t}) - \eta^2\left(\nabla^t_{x,j_t} - \nabla'^t_{x,j_t} \right)^2 - \|\bar{x}^t - x^{t+1}\|^2,
\end{align*}
where Cauchy-Schwarz inequality is used in the second inequality. After cancelling term $\|x^{t+1} - \bar{x}^t\|^2$ on both sides, we finally obtain
\begin{align*}
&\|x^t - x\|^2 - \frac{\eta\mu}{4}\big(x^t_{j_t} - x_{j_t}\big)^2 - \|x^{t+1} - x\|^2 - \frac{1}{2}\|x^t - \bar{x}^t\|^2 \\
\geq& 2\eta \left[g_{j_t}(\bar{x}^t_{j_t}) - g_{j_t}(x_{j_t}) \right]
    + 2 \eta\nabla'^t_{x,j_t} \cdot(\bar{x}^t_{j_t} - x_{j_t}) - \eta^2\left(\nabla'^t_{x,j_t} - \nabla^t_{x,j_t} \right)^2 \\
=& 2\eta \left[g_{j_t}(\bar{x}^t_{j_t}) - g_{j_t}(x_{j_t}) \right]
    + 2 \eta a_{i_tj_t}\bar{y}^t_{i_t} \cdot(\bar{x}^t_{j_t} - x_{j_t}) - 2 \eta \left(a_{i_tj_t}\bar{y}^t_{i_t} - \nabla'^t_{x,j_t} \right)\cdot(\bar{x}^t_{j_t} - x_{j_t})
     - \eta^2\left(\nabla'^t_{x,j_t} - \nabla^t_{x,j_t} \right)^2 \\
=& 2\eta \left[g_{j_t}(x'^t_{j_t}) - g_{j_t}(x_{j_t}) \right]
    + 2 \eta a_{i_tj_t}y'^t_{i_t} \cdot(x'^t_{j_t} - x_{j_t}) - 2 \eta \left(a_{i_tj_t}y'^t_{i_t} - \nabla'^t_{x,j_t} \right)\cdot(x'^t_{j_t} - x_{j_t})
     - \eta^2\left(\nabla'^t_{x,j_t} - \nabla^t_{x,j_t} \right)^2,
\end{align*}
where in the last line we replaced $\bar{x}^t_{j_t}$ by $x'^t_{j_t}$, and $\bar{y}^t_{i_t}$ by $y'^t_{i_t}$.

Similarly, we can derive an analogous bound for dual variable $y$:
\begin{align*}
&\|y^t - x\|^2 - \frac{\tau\gamma}{4d}\big(y^t_{i_t} - y_{i_t}\big)^2 - \|y^{t+1} - y\|^2 - \frac{1}{2}\|y^t - \bar{y}^t\|^2 \\
=& \frac{2\tau}{d} \left[\phi^*_{i_t}(y'^t_{i_t}) - \phi^*_{i_t}(y_{i_t}) \right]
    + 2 \tau a_{i_tj_t}x'^t_{j_t} \cdot(y_{i_t} - y'^t_{i_t}) - 2 \tau \left(a_{i_tj_t}x'^t_{j_t} - \nabla'^t_{y,i_t} \right)\cdot(y_{i_t} - y'^t_{i_t})
     - \tau^2\left(\nabla'^t_{y,i_t} - \nabla^t_{y,i_t} \right)^2.
\end{align*}
After diving them by $2\eta$ and $2\tau$ respectively, we add the above two inequalities together:
\begin{align*}
&\frac{1}{2\eta}\left[\|x^t - x\|^2 - \frac{\eta\mu}{4}\big(x^t_{j_t} - x_{j_t}\big)^2 - \|x^{t+1} - x\|^2 - \frac{1}{2}\|x^t - \bar{x}^t\|^2\right]\\
&+\frac{1}{2\tau}\left[\|y^t - x\|^2 - \frac{\tau\gamma}{4d}\big(y^t_{i_t} - y_{i_t}\big)^2 - \|y^{t+1} - y\|^2 - \frac{1}{2}\|y^t - \bar{y}^t\|^2\right] \\
=& g_{j_t}(x'^t_{j_t}) - g_{j_t}(x_{j_t}) - a_{i_tj_t}y'^t_{i_t}\cdot x_{j_t} + a_{i_tj_t}x'^t_{j_t}\cdot y_{i_t} +
    \frac{1}{d}\left[\phi^*_{i_t}(y'^t_{i_t}) - \phi^*_{i_t}(y_{i_t}) \right] \\
    &-\left(a_{i_tj_t}y'^t_{i_t} - \nabla'^t_{x,j_t} \right)\cdot(x'^t_{j_t} - x_{j_t})
     -\left(a_{i_tj_t}x'^t_{j_t} - \nabla'^t_{y,i_t} \right)\cdot(y_{i_t} - y'^t_{i_t}) \\
    &- \frac{\eta}{2}\left(\nabla'^t_{x,j_t} - \nabla^t_{x,j_t} \right)^2
     - \frac{\tau}{2}\left(\nabla'^t_{y,i_t} - \nabla^t_{y,i_t} \right)^2.
\numberthis \label{eq:alg2_before_exp}
\end{align*}

Now, we need to take conditional expectation of this inequality, by conditioning on $\F_t$. First, since both $x'^t_j$ and $y'^t_i$ are independent of both $i_t$ and $j_t$ for any $i\in[n]$ and $j\in[d]$. Thus,
\begin{align*}
&\E{g_{j_t}(x'^t_{j_t}) - g_{j_t}(x_{j_t})|\F_t} = \frac{1}{d}\sum_{j=1}^d\E{g_j(x'^t_j) - g_j(x_j)|\F_t} = \frac{1}{d}\E{g(x'^t) - g(x)|\F_t}, \\
&\E{\phi_{i_t}^*(y'^t_{i_t}) - \phi_{i_t}^*(y_{i_t})|\F_t} = \frac{1}{n}\sum_{i=1}^n\E{\phi_i^*(y'^t_i) - \phi_i^*(y_i)|\F_t} = \frac{1}{n}\E{\phi^*(y'^t) - \phi^*(y)|\F_t}, \\
&\E{a_{i_tj_t}y'^t_{i_t}\cdot x_{j_t}|\F_t}=\frac{1}{nd}\sum_{i=1}^n\sum_{j=1}^d\E{a_{ij}x_j y'^t_i}=\frac{1}{nd}\E{y'^{t\top}Ax}, \\
&\E{a_{i_tj_t}x'^t_{j_t}\cdot y_{i_t}|\F_t}=\frac{1}{nd}\sum_{i=1}^n\sum_{j=1}^d\E{a_{ij}x'^t_j y_i}=\frac{1}{nd}\E{y^\top Ax'^t}.
\end{align*}
Second, by using the definition of $\nabla'^t_{x,j_t}$ and $\nabla'^t_{y,i_t}$, we have
\begin{align*}
&\E{\left(a_{i_tj_t}y'^t_{i_t} - \nabla'^t_{x,j_t} \right)\cdot(x'^t_{j_t} - x_{j_t})|\F_t} \\
=&\E{\left(a_{i_t,j_t}\tilde{y}^k_{i_t} - G_{x,j_t}^k \right)\cdot(x'^t_{j_t} - x_{j_t})|\F_t} \\
=&\E{\E[i_t]{a_{i_t,j_t}\tilde{y}^k_{i_t} - G_{x,j_t}^k} \cdot (x'^t_{j_t} - x_{j_t})|\F_t} \\
=&\E{0 \cdot (x'^t_{j_t} - x_{j_t})|\F_t} = 0,
\end{align*}
and similarly
\[
\E{\left(a_{i_tj_t}x'^t_{j_t} - \nabla'^t_{y,i_t} \right)\cdot(y_{i_t} - y'^t_{i_t})|\F_t} = 0.
\]
Finally, observe that
\begin{align*}
&\E{\big(x^t_{j_t} - x_{j_t}\big)^2\big|\F_t} = \frac{1}{d}\|x^t - x\|^2, \\
&\E{\big(y^t_{i_t} - y_{i_t}\big)^2\big|\F_t} = \frac{1}{n}\|y^t - y\|^2,
\end{align*}
because both $x^t$ and $x$ is deterministic when conditioned $\F_t$.
By putting all these facts back into \eqref{eq:alg2_before_exp}, we can get:
\begin{align*}
&\frac{1}{2\eta}\E{\left(1 - \frac{\eta\mu}{4d}\right)\|x^t - x\|^2 -  \|x^{t+1} - x\|^2 - \frac{1}{2}\|x^t - \bar{x}^t\|^2\Big|\F_t}\\
&+\frac{1}{2\tau}\E{\left(1 - \frac{\tau\gamma}{4nd}\right)\|y^t - y\|^2 - \|y^{t+1} - y\|^2 - \frac{1}{2}\|y^t - \bar{y}^t\|^2\Big|\F_t} \\
\geq& \frac{1}{d}\cdot\E{g(x'^t) - g(x) - \frac{1}{n}y'^{t\top}Ax + \frac{1}{n}y^\top Ax'^t + \frac{1}{n}\left[\phi^*(y'^t) - \phi^*(y) \right]\Big|\F_t} \\
    &- \frac{\eta}{2}\E{\left(\nabla'^t_{x,j_t} - \nabla^t_{x,j_t} \right)^2\big|\F_t}
     - \frac{\tau}{2}\E{\left(\nabla'^t_{y,i_t} - \nabla^t_{y,i_t} \right)^2\big|\F_t} \\
=& \frac{1}{d}\cdot\E{F(x'^t,y) - F(x,y'^t)\big|\F_t}
    - \frac{\eta}{2}\E{\left(\nabla'^t_{x,j_t} - \nabla^t_{x,j_t} \right)^2|\F_t}
     - \frac{\tau}{2}\E{\left(\nabla'^t_{y,i_t} - \nabla^t_{y,i_t} \right)^2|\F_t}.
\end{align*}
Now, we apply Lemma \ref{lemma:grad_variance} to bound the variance terms, and rearrange terms, then get
\begin{align*}
&\frac{1}{2\eta}\E{\left(1 - \frac{\eta\mu}{4d}+\frac{12R'^2\eta\tau}{nd}\right)\|x^t - x\|^2 -  \|x^{t+1} - x\|^2 - \left(\frac{1}{2} - \frac{8\eta\tau R'^2}{n}\right)\|x^t - \bar{x}^t\|^2\big|\F_t}\\
&+\frac{1}{2\tau}\E{\left(1 - \frac{\tau\gamma}{4nd} + \frac{12R^2\eta\tau}{nd}\right)\|y^t - y\|^2 - \|y^{t+1} - y\|^2 - \left(\frac{1}{2} - \frac{8\eta\tau R^2}{d}\right)\|y^t - \bar{y}^t\|^2\big|\F_t} \\
\geq&\frac{1}{d}\cdot\E{F(x'^t,y) - F(x,y'^t)\big|\F_t} - \frac{4\eta R^2}{nd}\|\tilde{y}^k - y\|^2 - \frac{4\tau R'^2}{nd}\|\tilde{x}^k-x\|^2,
\end{align*}
which is the desired result.
\end{proof}

Now, we are ready to prove Theorem \ref{thm:alg2}:
\begin{proof}[Proof of Theorem \ref{thm:alg2}]
First, we apply Lemma \ref{lemma:alg2_one_step}, and set $(x,y)=(x^*,y^*)$, we can get
\begin{align*}
&\frac{1}{2\eta}\E{\left(1 - \frac{\eta\mu}{4d}+\frac{12R'^2\eta\tau}{nd}\right)\|x^t - x^*\|^2 -  \|x^{t+1} - x^*\|^2 - \left(\frac{1}{2} - \frac{8\eta\tau R'^2}{n}\right)\|x^t - \bar{x}^t\|^2\Big|\F_t}\\
&+\frac{1}{2\tau}\E{\left(1 - \frac{\tau\gamma}{4nd} + \frac{12R^2\eta\tau}{nd}\right)\|y^t - y^*\|^2 - \|y^{t+1} - y^*\|^2 - \left(\frac{1}{2} - \frac{8\eta\tau R^2}{d}\right)\|y^t - \bar{y}^t\|^2\Big|\F_t} \\
\geq&\frac{1}{d}\cdot\E{F(x'^t,y^*) - F(x^*,y'^t)\big|\F_t} - \frac{4\eta R^2}{nd}\|\tilde{y}^k - y^*\|^2 - \frac{4\tau R'^2}{nd}\|\tilde{x}^k-x^*\|^2 \\
\geq& - \frac{4\eta R^2}{nd}\|\tilde{y}^k - y^*\|^2 - \frac{4\tau R'^2}{nd}\|\tilde{x}^k-x^*\|^2
\numberthis \label{eq:before_step_size}
\end{align*}
where $F(x,y^*)-F(x^*,y)\geq0$ for any $x\in X$ and $y\in Y$ is a property of the optimal solution $(x^*,y^*)$ of saddle point problems.

In the next, we will bound the coefficients in \eqref{eq:before_step_size} respectively. Because of our choice of step sizes, we have
\begin{equation}
\label{eq:eta_tau}
\eta\cdot\tau
=\frac{\gamma}{128R^2}\min\left\{\frac{d\kappa}{n\kappa'},1\right\}\cdot\frac{n\mu}{128R'^2}\min\left\{\frac{n\kappa'}{d\kappa},1\right\}
=\frac{n\gamma\mu}{2^{14}R^2R'^2}\min\left\{\frac{d\kappa}{n\kappa'}, \frac{n\kappa'}{d\kappa}\right\}.
\end{equation}
Since $\min\{\alpha,\alpha^{-1}\}\leq 1$ for any $\alpha>0$, this equality implies
\[
\frac{1}{2} - \frac{8\eta\tau R^2}{d} \geq \frac{1}{2} - \frac{n\gamma\mu}{2^{11}dR'^2}= \frac{1}{2} - \frac{1}{2^{11}\kappa'} \geq 0
\quad\text{and}\quad
 \frac{1}{2} - \frac{8\eta\tau R'^2}{n} \geq \frac{1}{2} - \frac{1}{2^{11}\kappa} \geq 0,
\]
where the definition of $\kappa$ and $\kappa'$ along with the assumptions $\kappa\geq1$ and $\kappa'\geq1$ are used.
Besides, also from \eqref{eq:eta_tau} we can know:
\begin{align*}
\frac{4\eta R^2}{nd}
= \frac{\gamma\mu}{2^{12}dR'^2\cdot\tau}\cdot \min\left\{\frac{d\kappa}{n\kappa'}, \frac{n\kappa'}{d\kappa}\right\}
= \frac{1}{2^{12}n\kappa'\cdot\tau}\cdot \min\left\{\frac{d\kappa}{n\kappa'}, \frac{n\kappa'}{d\kappa}\right\}
\leq& \frac{1}{2^{12}\max\{n\kappa',d\kappa\}\cdot\tau} \\
=& \frac{1}{2^{12}m\cdot\tau},
\end{align*}
where we denote $m\triangleq \max\{n\kappa',d\kappa\}$ for simplicity,
and similarly we can show that
\[
\frac{4\tau R'^2}{nd}\leq \frac{1}{2^{12}m\cdot\eta}.
\]
Furthermore, observe that
\begin{align*}
\frac{\eta\mu}{4d}-\frac{12R'^2\eta\tau}{nd}=&\frac{\mu\eta}{4d}\left(1 - \frac{48R'^2\tau}{n\mu}\right) \\
=& \frac{\mu\eta}{4d}\left(1 - \frac{48}{128}\cdot\min\left\{\frac{n\kappa'}{d\kappa},1\right\}\right) \\
\geq&  \frac{\eta\mu}{4d}\cdot\frac{5}{8} \\
=& \frac{5\mu\gamma}{2^{12}dR^2}\cdot\min\left\{\frac{d\kappa}{n\kappa'},1\right\} \\
=&\frac{5}{2^{12}d\kappa}\cdot\min\left\{\frac{d\kappa}{n\kappa'},1\right\} \\
=&\frac{5}{2^{12}m},
\end{align*}
and
\[
\frac{\tau\gamma}{4nd} - \frac{12R^2\eta\tau}{nd}
\geq \frac{5}{2^{12}m}.
\]
Combining all these facts into \eqref{eq:before_step_size}, and rearranging terms, we can get the recursive relationship:
\begin{align*}
&\E{\frac{\|x^{t+1}-x^*\|^2}{\eta} + \frac{\|y^{t+1}-y^*\|^2}{\tau} \Big| \F_t} \\
\leq& \left(1 - \frac{5}{2^{12}m}\right)\left[\frac{\|x^t-x^*\|^2}{\eta} + \frac{\|y^t-y^*\|^2}{\tau}\right]
    + \frac{1}{2^{11}m}\left[\frac{\|\tilde{x}^k-x^*\|^2}{\eta} + \frac{\|\tilde{y}^k-y^*\|^2}{\tau}\right] \\
=& \left(1 - \frac{5}{2^{12}m}\right)\left[\frac{\|x^t-x^*\|^2}{\eta} + \frac{\|y^t-y^*\|^2}{\tau}\right]
    + \frac{1}{2^{11}m}\Delta_k.
\end{align*}
Now, we recursively apply this inequality, and use the law of total expectation, we can finally obtain:
\begin{align*}
&\E{\Delta_{k+1}} \\
=&\E{\frac{\|x^T-x^*\|^2}{\eta} + \frac{\|y^T-y^*\|^2}{\tau}} \\
\leq& \left(1 - \frac{5}{2^{12}m}\right)\E{\frac{\|x^{T-1}-x^*\|^2}{\eta} + \frac{\|y^{T-1}-y^*\|^2}{\tau}}
    + \frac{1}{2^{11}m}\cdot\Delta_k \\
\leq& \cdots \\
\leq& \left(1 - \frac{5}{2^{12}m}\right)^T\cdot\left[\frac{\|x^0-x^*\|^2}{\eta} + \frac{\|y^0-y^*\|^2}{\tau}\right] +
    \frac{1}{2^{11}m}\sum_{t=0}^{T-1}\left(1 - \frac{5}{2^{12}m}\right)^t \Delta_k \\
=& \left(1 - \frac{5}{2^{12}m}\right)^T\cdot\Delta_k + \frac{1}{2^{11}m}\cdot \frac{1 - [1 - 5/(2^{12}m)]^T}{1 - [1 - 5/(2^{12} m)]}\Delta_k \\
\leq& \left(1 - \frac{5}{2^{12}m}\right)^T\cdot\Delta_k + \frac{2}{5}\Delta_k.
\end{align*}
Therefore, to finish the proof, we only need to make sure $(1-5/(4096m))^T\leq 1/5$, which can be guaranteed if choosing some large enough $T\geq \Theta(m) = \Theta(\max\{d\kappa,n\kappa'\})$.
\end{proof}

\newpage
\section{Extra Numerical Experiments}
In this part, we will show some extra experiment results.
The experiment setting is basically same to Section \ref{sec:exp}. The only difference
is that we change the model to support vector machine (SVM) with squared-hinge loss, i.e.,
\[
\phi_i(u)=\max\{0,1-b_i u\}^2
\]
where $b_i\in\{\pm 1\}$ is the class label.
Note that the corresponding conjugate function is
\[
\phi_i^*(y)=\left\{\begin{array}{ll}
b_i y+\frac{y^2}{4}, \quad&\text{if }b_i y\leq 0, \\
+\infty,\quad&\text{if }b_i y>0,
\end{array}\right.
\]
whose proximal mapping has simple closed-form solution, and does not need to
be solved iteratively.

The results are shown in Figure \ref{fig:svm}. The performance of all methods are
similar to that in Section \ref{sec:exp}.

\begin{figure}[tb]
  \centering
  \includegraphics[width=0.32\linewidth]{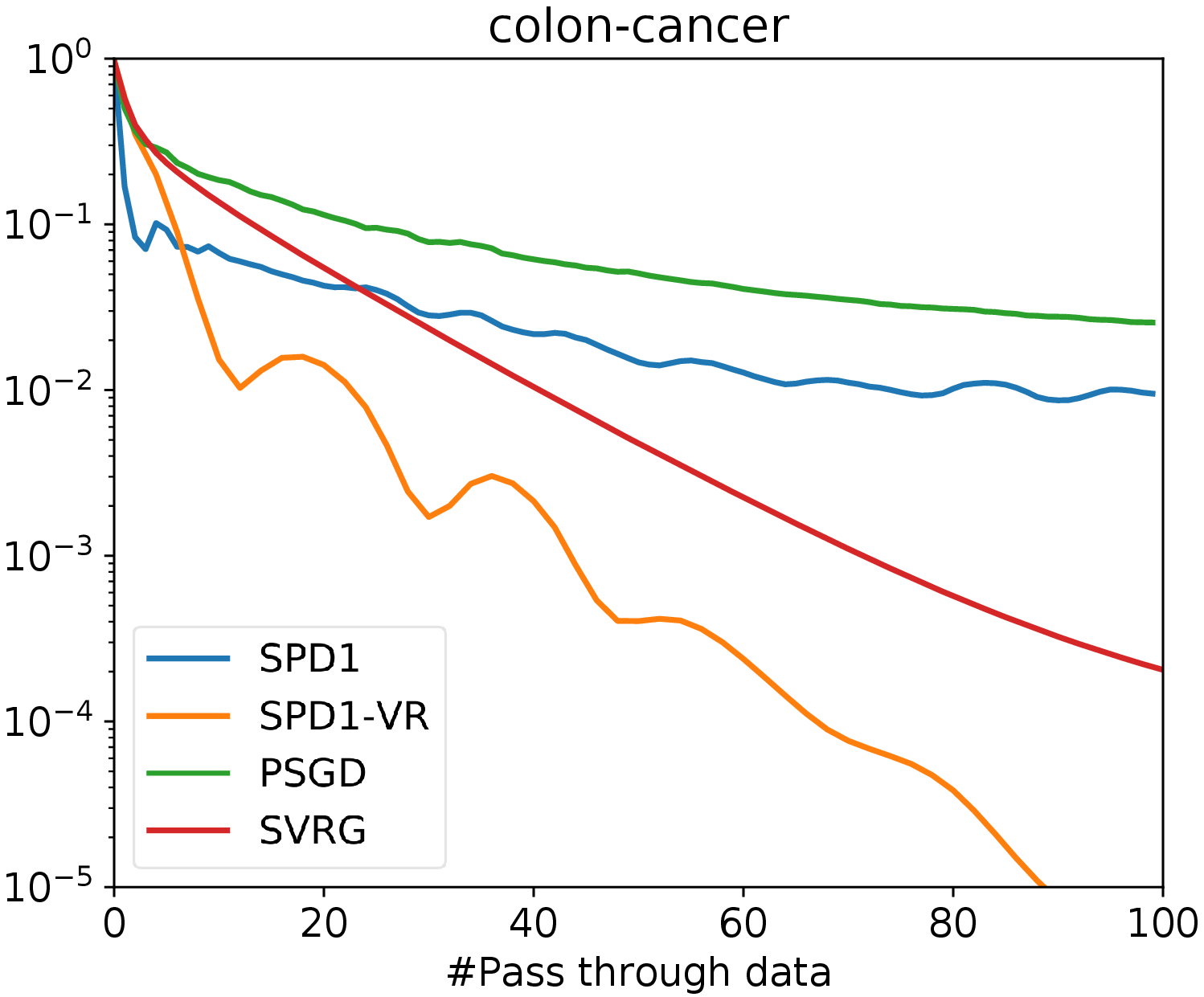}
  \includegraphics[width=0.32\linewidth]{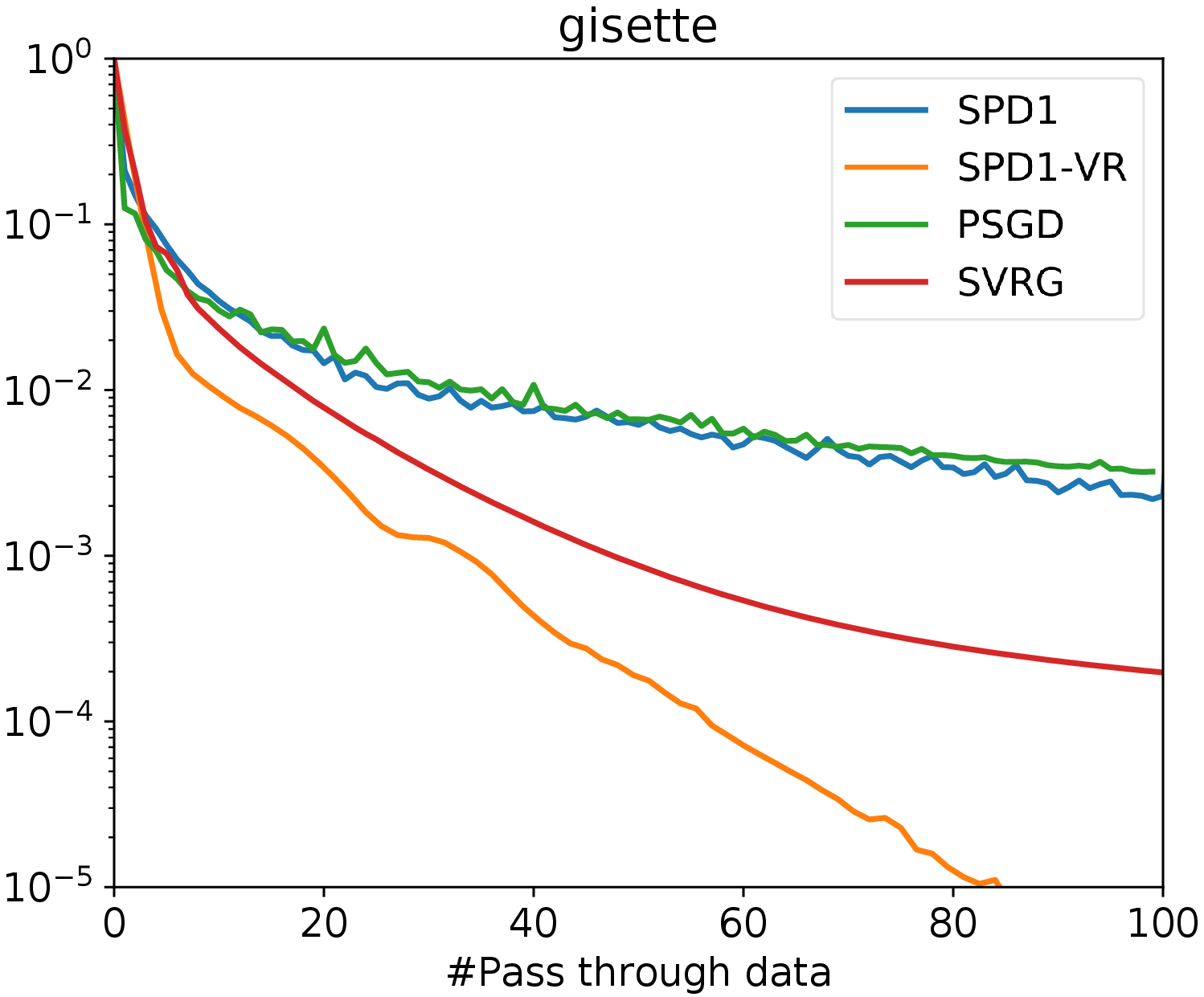}
  \includegraphics[width=0.32\linewidth]{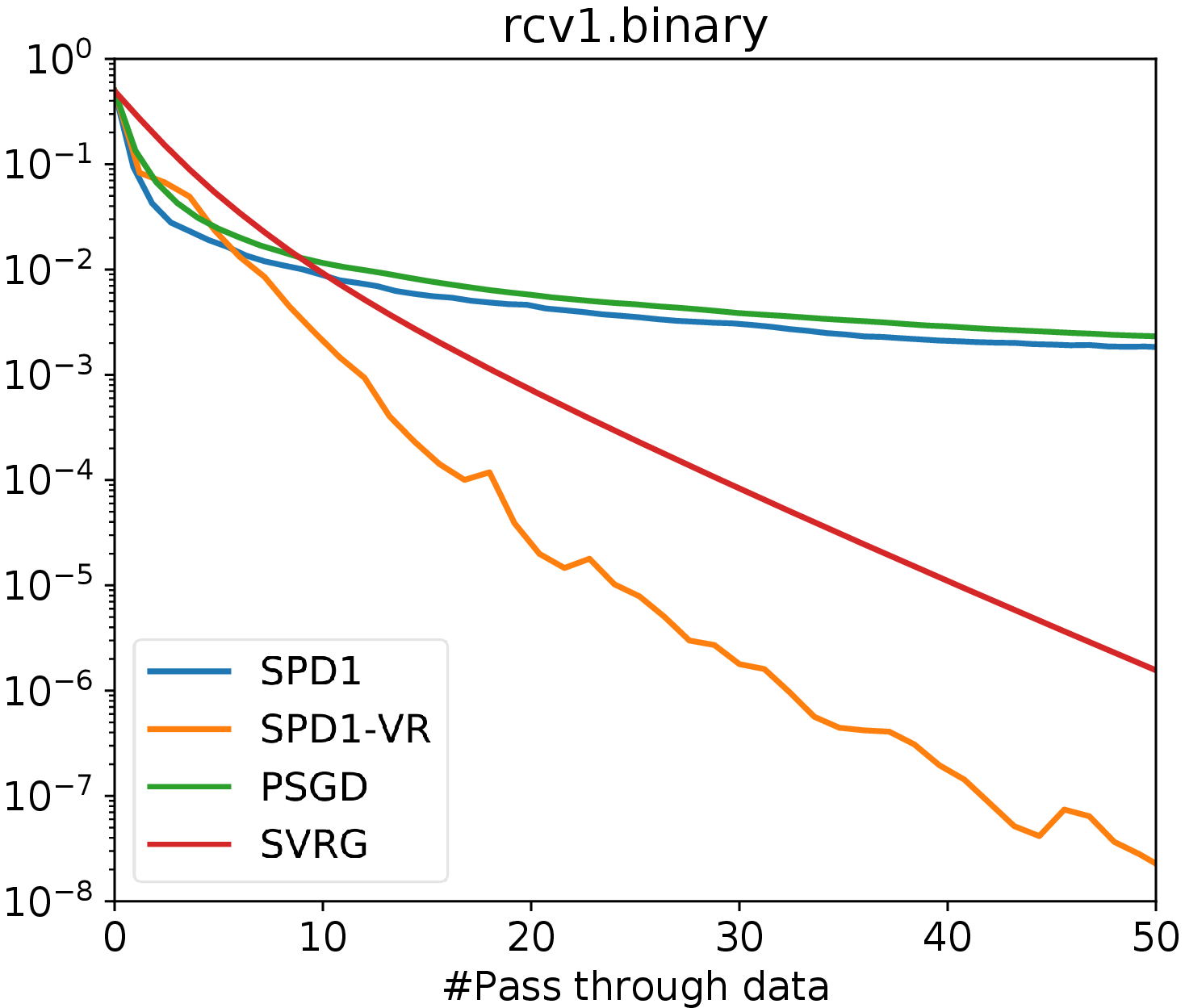}
  \caption{Numerical results on the problem of squared-hinge loss SVM. The $y$-axis is primal sub-optimality.}\label{fig:svm}
\end{figure}

Besides, we further report the running time of all methods in Table \ref{table:time}.
We can observe that SPD1 and SPD1-VR takes longer time than SGD and SVRG for each pass of data.
We think there are mainly two reasons which result in such phenomenon:
1) SPD1 and SPD1-VR involves much more loops, which may incur computation overhead
(our codes are written in Julia), while the updates of SGD and SVRG can be implemented
in vector forms, and the vector operations are conducted by some highly-optimized computation
libraries like OpenBLAS;
2) sampling only one scalar from the data matrix each time is cache-unfriendly for computers.
However, we believe these two issues can be solved. For example, the former issue can be tackled by using faster programming languages like C/C++.
While for the latter one, one possible way to solve it is adopting mini-batch versions of SPD1 and SPD1-VR,
which sample a batch of continuous coordinates instead of just one in each iteration.

\begin{table}[h]
\centering
\caption{Running time required by different methods for one pass of data (in seconds)}
\label{table:time}
\begin{tabular}{|c|c|c|c|}
\hline
Methods & \texttt{colon-cancer} & \texttt{gisette} & \texttt{rcv1.binary}  \\ \hline
SPD1 & $0.012$ & $2.70$ & $128.4$ \\ \hline
SPD1-VR & $0.013$ & $3.62$ & $105.5$ \\ \hline
SGD & $0.006$ & $1.07$ & $34.8$ \\ \hline
SVRG & $0.006$ & $1.37$ & $45.1$ \\ \hline
\end{tabular}
\end{table}

\end{appendices}

\end{document}